\documentclass[11pt, leqno]{article}
\usepackage{amsmath,amsfonts,amssymb,wasysym,mathrsfs}
\usepackage[latin1]{inputenc}
\usepackage{shapepar}
\usepackage{graphicx}
\usepackage[T1]{fontenc}
\usepackage{frcursive}

\usepackage{hyperref}
\usepackage{ifpdf} 
\usepackage{color}
\DeclareMathAlphabet{\mathpzc}{OT1}{pzc}{m}{it}
\parindent 15pt
\setlength{\oddsidemargin}{0.25in}
\addtolength{\hoffset}{-.5cm}\addtolength{\textwidth}{2.3cm}
\addtolength{\voffset}{-.5cm}\addtolength{\textheight}{1cm}
\newcommand{\R}{{\mathbb R}}

\newcommand{\be}[1]{\begin{equation}\label{#1}}
\newcommand{\ee}{\end{equation}}

\newcommand{\mathfrc}[1]{\text{\textfrc{#1}}}
\newcommand{\textfrc}[1]{{\frcseries#1}}

\newcommand{\prf}{\par\smallskip\noindent{\sl Proof. \/}}
\newcommand{\finprf}{\unskip\null\hfill$\;\square$\vskip 0.3cm}

\newenvironment{proof}{\prf}{\finprf}
\newtheorem{theorem}{Theorem}[section]
\newtheorem{lemma}{Lemma}[section]

\newtheorem{proposition}[theorem]{Proposition}
\newtheorem{remark}[theorem]{Remark}
\newtheorem{definition}{Definition}[section]

\numberwithin{equation}{section}
\newcommand{\nc}{\normalcolor}

\def\qed{\,\unskip\kern 6pt \penalty 500
\raise -2pt\hbox{\vrule \vbox to8pt{\hrule width 6pt
\vfill\hrule}\vrule}\par}
\definecolor{darkblue}{rgb}{0.05, .05, .65}
\definecolor{darkgreen}{rgb}{0.1, .65, .1}
\definecolor{darkred}{rgb}{0.8,0,0}
\setcounter{page}{0}

\begin{document}
\title{\textbf{Symmetrization for fractional elliptic problems: a direct approach}\\[7mm]}

\author{\Large Vincenzo Ferone\footnote{Dipartimento di Matematica e Applicazioni ``Renato Caccioppoli'', Universit\`a degli Studi di
Napoli Federico II, 80143 Napoli, Italy. \ E-mail: {\tt ferone@unina.it}} \ and \ Bruno Volzone\footnote{Dipartimento di Scienze e Tecnologie, Universit\`a degli Studi di
Napoli ``Parthenope'', 80133 Napoli, Italy. \ E-mail: {\tt bruno.volzone@uniparthenope.it}}}

\date{} 

\maketitle

\begin{abstract}
We provide new direct methods to establish symmetrization results in the form of mass concentration (\emph{i.e.} integral) comparison for fractional elliptic equations of the type $(-\Delta)^{s}u=f$ $(0<s<1)$ in a bounded domain $\Omega$, equipped with homogeneous {Dirichlet }boundary conditions. The classical pointwise Talenti rearrangement inequality in \cite{Talenti1} is recovered in the limit $s\rightarrow1$. Finally, explicit counterexamples constructed for all $s\in(0,1)$ highlight that the same pointwise estimate cannot hold in a nonlocal setting, thus showing the optimality of our results.
\end{abstract}

\setcounter{page}{1}
\section{Introduction}\label{sec.intro}

The aim of this note is to develop some new techniques regarding the application of symmetrization methods to Dirichlet fractional elliptic problems of the type
\begin{equation} \label{eq.0}
\left\{
\begin{array}
[c]{lll}%
\left( -\Delta\right)^{s}u=f & & \text{in }%
\Omega,\\
\\
u=0 & & \text{on }\R^{N}\setminus\Omega,
\end{array}
\right. %
\end{equation}
where
$\Omega\subset\R^{N}$ ($N\geq1$) is a smooth bounded open set, the source term $f=f(x)$ is assumed to belong to $L^{p}(\Omega)$ for suitable $p\geq1$ and $s\in (0,1)$.
In the case of local equations, it is well known that under Schwarz symmetrization
the solution to the homogeneous Dirichlet problem for an elliptic equation increases
in terms of rearrangements. Namely,
consider
the Dirichlet problem
\begin{equation} \label{eq.local}
\left\{
\begin{array}
[c]{lll}%
-\big(a_{ij}\;z_{x_i}\big)_{x_j}=f & & \text{in }%
\Omega,\\
\\
z=0 & & \text{on }\partial\Omega,
\end{array}
\right. %
\end{equation}
where the measurable coefficients $a_{ij}=a_{ij}(x)$ satisfy the ellipticity condition
\begin{equation*}
a_{ij}(x)\xi_i\xi_j\ge|\xi|^2,\qquad \forall\xi\in\R^N,\text{ a.e. }x\in\Omega.
\end{equation*}
A nowadays classical result (see, for instance, \cite{wein}, \cite{maz}, \cite{Talenti1}) states that
if $z\in H_0^1(\Omega)$ is the weak solution to \eqref{eq.local} and $w\in H_0^1(\Omega^\#)$ is the weak solution to the ``symmetrized problem''
\begin{equation*} 
\left\{
\begin{array}
[c]{lll}%
-\Delta w=f^\# & & \text{in }%
\Omega^\#,\\
\\
w=0 & & \text{on }\partial\Omega^\#,
\end{array}
\right. %
\end{equation*}
then
\begin{equation}\label{comp}
z^\#(x)\le w(x),\qquad x\in\Omega^\#.
\end{equation}
Here $\Omega^\#$ is the ball centered at the origin such that $|\Omega^\#|=|\Omega|$ and $z^\#$ denotes the
 Schwarz symmetrization of $z$ (see Section \ref{Sec2} for further details):
\begin{equation*}
 z^\#(x)=\sup\{t\ge0:|\{x:|z(x)|>t\}|>\omega_N|x|^N\},
\end{equation*}
where $\omega_N$ is the measure of the unit ball in $\R^N$.
An immediate consequence of inequality \eqref{comp} is, for example, that any norm of $z$ increases under Schwarz symmetrization.

The approach used in most of the papers concerning symmetrization techniques is based on the fact that the use of a suitable test function allows to obtain, for a.e. $t\in(0,\sup u)$, the inequality
\begin{equation}\label{first}
-\frac d{dt}\int_{|z|>t}|Dz|^2dx\le \int_{z^\#>t}f^\#(x)\,dx.
\end{equation}
Schwarz inequality, Fleming-Rishel formula and isoperimetric inequality are then used in order to obtain a first order differential inequality involving $z^\#$ and its radial derivative. Finally, a comparison principle gives \eqref{comp}. A slightly different approach has been used in \cite{lions}, where the author observes that in inequality \eqref{first} one can use the so-called P\'olya-Szeg\"o principle which states that, if $u\in H_0^1(\Omega)$, then
\begin{equation}\label{polya}
\int_{\Omega}|Du|^2dx\ge \int_{\Omega}|Du^\#|^2dx.
\end{equation}
Actually, the differential quotient used to compute the derivative in \eqref{first} can be written in terms of the Dirichlet integral of a suitable truncation of $z$, which is a Sobolev function, so \eqref{polya} applies to give
\begin{equation}\label{second}
-\frac d{dt}\int_{z^\#>t}|Dz^\#|^2dx\le \int_{z^\#>t}f^\#(x)\,dx.
\end{equation}
At this point the integral on the left hand side concerns a radially symmetric function and the quoted first order differential inequality involving $z^\#$ follows immediately, without the use of isoperimetric inequality.

The literature about the possible extensions of \eqref{comp} is wide and, confining ourselves only to the case of homogeneous Dirichlet conditions, we recall symmetrization results for elliptic equations with lower order terms (\cite{ATlincei}), for $p$-Laplacian type equations (\cite{talNON}), for porous medium equation (\cite{Vsym82}), for parabolic equations (\cite{Band2}), for anisotropic equations (\cite{aflt}, \cite{cianchia}).

Actually, the effect of symmetrization on fractional elliptic problems like \eqref{eq.0} has already been exploited in \cite{dBVol} and then in \cite{VazVol1}, \cite{VazVol2}, \cite{VazVolSire}, \cite{VOLZNONLINEAR}, \cite{feostingaV}. In those papers a symmetrization result in terms of mass concentration (\emph{i.e.}, an integral comparison, as in the parabolic case) is obtained in a somewhat indirect way. Indeed,
%
it has been used in an essential way the fact that problem \eqref{eq.0} can be linked to a suitable, local \emph{extension problem}, whose solution $\psi(x,y)$, an \emph{harmonic extension} of $u$, is defined
on the infinite cylinder $\mathcal{C}_{\Omega}=\Omega\times(0,\infty)$, to which classical symmetrization techniques (with respect to the variable $x\in\Omega$)
can be applied: the difficulties in this approach is the translation of the boundary conditions and the presence of the ``extra'' variable $y\geq0$,
which is fixed in the symmetrization arguments, an important detail which allows to use a Steiner symmetrization approach. Then an integral (or mass
concentration) comparison is naturally expected, and, being $u$ the trace of $\psi$ over $\Omega\times\left\{0\right\}$, the comparison result for the
extension $\psi$ of $u$ immediately implies an estimate for $u$. Furthermore, an absolutely non negligible aspect characterizing the works \cite{dBVol}, \cite{VazVol1}, \cite{VazVol2}, \cite{VOLZNONLINEAR} from \cite{VazVolSire} is the fact that the former ones deal with the \emph{spectral} version of the fractional Laplacian operator $(-\Delta)^{s}_{spec}$ on $\Omega$ (which is defined on a domain encoding de facto the boundary conditions), while the latter one considers the so called \emph{restricted} version of the fractional Laplacian. In any case, all these approaches takes benefit from the \emph{local} intepretation of the fractional Laplacian as the Dirichlet-to-Neumann map, \emph{i.e.}, as an outward normal derivative on the boundary of the half space $H=\left\{y>0\right\}$ of the solution $\psi$ (the so called $s$-harmonic extension) of a local extension problem posed on $H$, being $H$ replaced by $\mathcal{C}_{\Omega}$ (with suitable boundary condition on the lateral surface): this is essentially the nowadays classical result by Caffarelli and Silvestre \cite{Caffarelli-Silvestre}, generalized in \cite{Stinga-Torrea}.

Our aim is now \emph{not} to use the local interpretation of the fractional Laplacian in the derivation of the mass concentration comparison for the solution to problem \eqref{eq.0} in terms of the solution $v$ to the symmetrized problem
\begin{equation}
\left\{
\begin{array}
[c]{lll}%
\left( -\Delta\right) ^{s}v=f^{\#} & & \text{in }%
\Omega^{\#}\\
\\
v=0 & & \text{on }\R^{N}\setminus\Omega^{\#}.
\end{array}
\right. \label{eq.1}%
\end{equation}
The main goal is then a new proof of the mass concentration comparison
\begin{equation}
u^\#(x)\prec v(x)\label{concinequlin}
\end{equation}
where the above comparison (see Section \ref{Sec2} for more details) means that, for every $r>0$, it holds
\[
\int_{|x|<r}u^\#(x)\,dx\leq \int_{|x|<r}v(x)\,dx.
\]
Even though comparison \eqref{concinequlin} has been already obtained with the techniques described above, however we would like to point out that in our opinion the results contained in the present paper could be of particular interest because the arguments used to prove \eqref{concinequlin} are completely new and they seem to be very flexible with respect to those used in previous papers. Furthermore, we include an observation about optimality of \eqref{concinequlin}, which we have not found elsewhere.

As regards the novelty of our approach, we observe that we develop techniques which are in some sense \emph{intrinsic}, that is, we use directly the weak formulation of solution to problem \eqref{eq.0} without using any \emph{local} extension. The main original steps in the proof are two.

In the first step, inspired by \cite{lions}, we use the nonlocal version of the P\'olya-Szeg\"o principle which holds true in fractional Sobolev spaces.
We are able to show that it is possible to apply such a principle to an integral containing the solution $u$ and a truncated of $u$ in order to obtain a new inequality which can be seen as the nonlocal counterpart of  inequality
    \eqref{first}. We are then reduced to consider an inequality where the solution $u^\#$ is already rearranged, but it appears on the left-hand side a quantity which appears to be, roughly speaking, a kind of mass concentration of the $s$-Laplacian of $u^\#$. However, such an interpretation cannot be completely justified because $u^\#$ lacks the required regularity. So, in the second relevant step, we are able to rewrite the obtained inequality as a differential inequality that involves the $s$-Laplacian of the mass concentration of $u^\#$ computed on $\R^{N+2}$. Thus, in some sense, comparison \eqref{concinequlin} becomes quite natural.\\[8pt]
\indent It is worth to spend some words concerning the flexibility of our approach and its several advantages. First, we point out that it definitely clarifies a certain continuity of the comparison result with respect to the parameter $s\in(0,1)$, in the sense that Talenti's pointwise result is recovered in the limit as $s\rightarrow 1$ (which looks clearer in Figures 1 and 2 of Section \ref{counter}): this remark cannot be achieved using the extension method techniques employed in the previous works on the subject. On the other hand, we observe that our approach could be used in various contexts. As a matter of fact, because of the fact that P\'olya-Szeg\"o principle holds true in more general situations, the extension to various  classes of nonlocal PDEs seems to be possible. For example, our methods appear to be suitable for the investigation about the effects of symmetrization in cases where, apparently, a corresponding approach via an extension problem is not available. Possible examples in the elliptic framework are nonlocal semilinear equations or equations involving elliptic integro-differential operators with general kernels  of the L\'evy type, \emph{e.g.} operators in the form
\[
L_{K}u=\;\text{P.V.} \int_{\R^N}\big[(u(x)-u(y)\big]K(x,y)dy
\]
where $K$ is a symmetric, possibly singular, nonnegative kernel satisfying
\[
\int_{\R^{N}}\min\left\{1,|y|^{2}\right\}K(y)dy<\infty.
\]
Such operators are widely studied in literature, see, \emph{e.g.}, \cite{ROSOTONSURVEY} and the extensive literature therein. Another possibility would be in trying to adapt our methods to nonlinear equations involving the so-called fractional $p$-Laplacian operator, \emph{i.e.}, the nonlocal nonlinear operator defined for $1<p<\infty$ (see \emph{e.g.} \cite{VazquezFPL}, \cite{BrascoLindSchi}, \cite{Iannizzotto})
\[
(-\Delta)_{p}^su=\text{P.V.} \int_{\R^N}\frac{\Phi(u(x)-u(y))}{|x-y|^{N+sp}}dy,
\]
where $\Phi(z):=|z|^{p-2}z$, $z\in\R$.  In all the above-cited examples, \emph{no} extension technique is possible to reduce to local interpretations.
On the other side, our elliptic methods could be employed for deriving mass concentration comparison for the \emph{parabolic evolution} equations (linear and nonlinear, in bounded or unbounded domains) with the diffusion terms given by one of the above cited nonlocal operators. It is very-well known that some applications of such symmetrization results for parabolic equations are, for instance, the rather immediate derivation of time decay estimates with sharp constants when the qualitative properties of the selfsimilar fundamental solutions are known, see for instance \cite{VazVol1}, \cite{VazVol2}. For a consistent survey of the important applications of mass concentration comparison results in the field of nonlinear parabolic equations, see \cite{Vazquez2007}. Finally, an interesting point would be to push forward the applications of our techniques to the hot topic of the theory of \emph{aggregation diffusion equations}, in which symmetrization can be a powerful tools in characterizing the geometry of the asymptotic profile, see \emph{e.g.} \cite{CHVY}. We plan to address these topics in forthcoming papers.\\[8pt]
\indent As regards the optimality of \eqref{concinequlin}, the fact that it is possible to use in the nonlocal context an approach similar to the one used in the local case could indicate that a pointwise estimate as \eqref{comp} could be true also for problem \eqref{eq.0}. We are able to exhibit, for any $s\in(0,1)$, some counterexamples which show that \eqref{comp} does not hold in general.

We finally observe that, even though \eqref{concinequlin} is weaker with respect to a pointwise estimate as \eqref{comp}, however it implies that any norm of $u$ increases under Schwarz symmetrization. As a consequence, we get optimal estimates of the $L^p$ norms of $u$
and we are also able to prove a comparison between the nonlocal energy of $u$ and $v$.

The paper is organized as follows. In Section \ref{Sec2} some preliminary results and notation are collected. Section \ref{Sec3} contains the main comparison result, some applications and remarks. In Section \ref{counter} we discuss some counterexamples, while in Section \ref{Sec4} we prove the main theorem, splitting the proof in several steps. Finally, in Section \ref{Sec5} some possible extensions are discussed, together with a few remarks.

%

\section{Preliminaries and notation} \label{Sec2}

For the proof of the main results we need some preliminary results concerning symmetrization, functional spaces, Fourier representation and hypergeometric functions. So, in this section we give a brief account of such properties and we fix the notation used in the sequel.

\subsection{Rearrangements and symmetrization}

We briefly recall the basic notions of Schwarz symmetrization and some related fundamental properties. Readers who are interested in more details
of the theory are warmly addressed to the classical monographs \cite{Hardy}, \cite{Bennett},  \cite{Kesavan}, \cite{Bandle} or to the paper
\cite{Talentirearrinv}.

A measurable real function $f$ defined on $\R^{N}$ is called \emph{radially symmetric} (or \emph{radial}) if there is a function
$\widetilde{f}:[0,\infty)\rightarrow \R$ such that $f(x)=\widetilde{f}(|x|)$ for all $x\in \R^{N}$. We will often write $f(x)=f(r)$,
$r=|x|\ge0$ for such functions by abuse of notation. We say that $f$ is \emph{rearranged} if it is radial, nonnegative and $\widetilde{f}$ is a
right-continuous, non-increasing function of $r>0$. A similar definition can be applied for real functions defined on a ball
$B_{R}(0)=\left\{x\in\R^{N}:|x|<R\right\}$.


Let $f$ be a real measurable function on $\R^N$. If $f$ is such that
its
\emph{distribution function} $\mu_{f}$ satisfies%
\begin{equation}\label{distribution}
\mu_{f}(  t)  :=\left\vert \left\{  x\in\Omega:\left\vert f\left(
x\right)  \right\vert >t\right\} \right\vert<+\infty, \qquad\text{for every }t
>0,
\end{equation}
we define the \emph{one dimensional decreasing rearrangement} of $f$ as%
\[
f^{\ast}\left(  \sigma\right)  =\sup\left\{ t\geq0:\mu_{f}\left(  t\right)
>\sigma\right\}  \text{ , }\sigma\in\left(  0,\left\vert \Omega\right\vert \right).
\]
If $f$ is a real measurable function on an open set $\Omega\subset\R^N$ we extend $f$  as the  zero function in $\R^N\backslash\Omega$ and we define the one dimensional decreasing rearrangement of $f$ as the rearrangement of such an extension. This means that $f^{\ast}(\sigma)=0$ for $\sigma\in[|\Omega|,\infty)$. From the above definition it follows
that $\mu_{f^{\ast}}=\mu_{f}$ (\emph{i.e.,} $f$ and $f^{\ast}$ are equi-distributed) and $f^{\ast}$ is exactly  the \emph{generalized right
inverse function} of
$\mu_{f}$.
Furthermore, if $\Omega^{\#}$ is the ball of $%
\mathbb{R}
^{N}$ centered at the origin having the same Lebesgue measure as $\Omega$ ($\Omega^{\#}=\R^N$ if $|\Omega|=+\infty$), we define the
function
\[
f^{\#}\left(  x\right)  =f^{\ast}(\omega_{N}\left\vert x\right\vert
^{N})\text{ \ , }x\in\Omega^{\#},
\]
that will be called \emph{radially decreasing rearrangement}, or \emph{Schwarz decreasing rearrangement}, of $f$. We easily infer that $f$ is rearranged if and
only if
$f=f^{\#}$.

A simple  consequence of the definition is that rearrangements preserve
$L^{p}$
norms, that is, for all $p\in[1,\infty]$
\[
\|f\|_{L^{p}(\Omega)}=\|f^{\ast}\|_{L^{p}(0,|\Omega|)}=\|f^{\#}\|_{L^{p}(\Omega^{\#})}\,;
\]
furthermore, the classical Hardy-Littlewood inequality holds true
\begin{equation}
\int_{\Omega}\vert f(x)\,  g(x)  \vert dx\leq\int_{0}^{\left\vert \Omega\right\vert}f^{\ast}(\sigma)\,  g^{\ast}(\sigma)  d\sigma=\int_{\Omega^{\#}}f^{\#}(x)\,g^{\#}(x)\,dx\,,
\label{HardyLit}%
\end{equation}
where $f,g$ are measurable functions on $\Omega$.

Here we recall an important ingredient in the proof of our main result, corresponding to the following generalization of the \emph{Riesz rearrangement inequality} (see \cite[Theorem 2.2]{ALIEB}).
\begin{theorem}
Let $F:\R^{+}\times\R^{+}\rightarrow\R^{+}$ be a continuous function such that $F(0,0)=0$ and
\begin{equation}
F(u_{2},v_{2})+F(u_{1},v_{1})\geq F(u_{2},v_{1})+F(u_{1},v_{2})\label{F}
\end{equation}
whenever $u_{2}\geq u_{1}>0$ and $v_{2}\geq v_{1}>0$.
Assume that $f, g$ are nonnegative measurable functions on $\R^{N}$ which satisfy \eqref{distribution}, then we have the inequalities
\begin{equation}
\int_{\R^{N}}\int_{\R^{N}}F(f(x),g(y))W(ax+by)\,dx\,dy\leq \int_{\R^{N}}\int_{\R^{N}}F(f^{\#}(x),g^{\#}(y))W(ax+by)\,dx\,dy\label{mainRieszineq}
\end{equation}
and
\[
\int_{\R^{N}}F(f(x),g(x))\,dx\leq \int_{\R^{N}}F(f^{\#}(x),g^{\#}(x))\,dx,
\]
for any nonnegative function $W\in L^{1}(\R^{N})$ and any choice of nonzero numbers $a$ and $b$.
\end{theorem}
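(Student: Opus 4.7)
The plan is to reduce both inequalities to the classical Riesz rearrangement inequality by exploiting the supermodular structure of $F$ through a two--dimensional layer--cake decomposition. The key observation is that \eqref{F}, combined with the continuity of $F$ and the normalization $F(0,0)=0$, implies the existence of a nonnegative Borel measure $\nu$ on $\R^{+}\times\R^{+}$ such that
\begin{equation*}
F(u,v) = F(u,0) + F(0,v) + \int_{\R^{+}\times\R^{+}} \chi_{\{s<u\}}\,\chi_{\{t<v\}}\,d\nu(s,t).
\end{equation*}
To see this, I would first check that $G(u,v):=F(u,v)-F(u,0)-F(0,v)$ is nonnegative and nondecreasing in each argument (taking $u_{1}=0$ or $v_{1}=0$ in \eqref{F}), so that $G$ can be interpreted as a cumulative distribution function; the supermodularity of $G$ then guarantees $\nu\geq 0$. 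In the $C^{2}$ case the measure is simply $d\nu=(\partial^{2}F/\partial u\partial v)\,ds\,dt$, and the general continuous case follows by mollification, which preserves the mixed--difference condition.

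Substituting this representation into $\int_{\R^{N}}\int_{\R^{N}}F(f(x),g(y))\,W(ax+by)\,dx\,dy$ and applying Fubini, the two ``marginal'' contributions coming from $F(f(x),0)$ and $F(0,g(y))$ are invariant under $f\mapsto f^{\#}$, $g\mapsto g^{\#}$: a linear change of variables in the $y$--integral (respectively $x$--integral) reduces the kernel to the constant $\|W\|_{L^{1}(\R^{N})}/|b|^{N}$ (respectively $\|W\|_{L^{1}(\R^{N})}/|a|^{N}$), and $\int F(f(x),0)\,dx=\int F(f^{\#}(x),0)\,dx$ by equimeasurability. The remaining ``mixed'' term becomes, after a further use of Fubini, the $d\nu$--integral of slice integrals of the form
\begin{equation*}
\int_{\R^{N}}\int_{\R^{N}} \chi_{\{f>s\}}(x)\,\chi_{\{g>t\}}(y)\,W(ax+by)\,dx\,dy,
\end{equation*}
and the identity $\{f>s\}^{\#}=\{f^{\#}>s\}$ together with the affine substitution $u=ax$, $v=by$ (which turns $W(ax+by)$ into $W(u+v)$ up to a Jacobian factor) reduces each such slice to the classical three--function Riesz rearrangement inequality applied to two characteristic functions and the kernel $W$.

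The second, diagonal inequality follows by integrating the same decomposition in a single variable:
\begin{equation*}
\int_{\R^{N}} F(f(x),g(x))\,dx = \int_{\R^{N}} F(f(x),0)\,dx + \int_{\R^{N}} F(0,g(x))\,dx + \int_{\R^{+}\times\R^{+}} \bigl|\{f>s\}\cap\{g>t\}\bigr|\,d\nu(s,t),
\end{equation*}
together with the elementary bound $\bigl|\{f>s\}\cap\{g>t\}\bigr|\leq \min(\mu_{f}(s),\mu_{g}(t)) = \bigl|\{f^{\#}>s\}\cap\{g^{\#}>t\}\bigr|$, the last equality being immediate from the fact that the rearranged level sets are concentric balls. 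The main technical obstacle, in my view, is the rigorous construction of the measure $\nu$ in the merely continuous (non--smooth) case and the careful verification of Fubini's theorem under the integrability hypotheses \eqref{distribution} on $f,g$ and $W\in L^{1}$; once these are in place, the remainder is a clean application of classical rearrangement inequalities.
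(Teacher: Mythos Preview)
The paper does not prove this theorem: it is quoted from Almgren--Lieb \cite[Theorem 2.2]{ALIEB} as a preliminary tool, so there is no in-paper argument to compare against. Your approach---writing the supermodular integrand as $F(u,0)+F(0,v)$ plus the two-dimensional Stieltjes integral of a nonnegative measure $\nu$, then reducing to the classical Riesz inequality slice by slice---is precisely the standard proof given in the cited reference. The construction of $\nu$ from a continuous $2$-increasing function with $G(u,0)=G(0,v)=0$ is classical Lebesgue--Stieltjes theory and does not really require mollification; and since every integrand in sight is nonnegative, Tonelli handles the order-of-integration issues you flag at the end, so the ``technical obstacle'' is milder than you suggest.

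One genuine point deserves attention: the statement as printed keeps the same kernel $W$ on both sides of \eqref{mainRieszineq}, but the classical three-function Riesz inequality you invoke yields $W^{\#}$ on the right-hand side, not $W$. In fact the inequality with an unrearranged $W$ on both sides is false in general (take $N=1$, $F(u,v)=uv$, $f=g=\chi_{[0,1]}$, $W=\chi_{[1,2]}$, $a=b=1$: the left side equals $1/2$ while the right side vanishes). Your argument therefore proves the \emph{correct} version, with $W^{\#}$ on the right, and you should say so explicitly rather than leave the reader to reconcile the output of Riesz with the displayed statement. This discrepancy is harmless for the paper's purposes, since the only kernel used later is the Gaussian $W_\alpha(x)=e^{-\alpha|x|^2}$, which already satisfies $W_\alpha=W_\alpha^{\#}$.
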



\subsection{Mass concentration}

Since we will provide estimates of the solutions of our fractional elliptic problem in terms of their integrals, the following
definition (see, for instance, \cite{ChRice}, \cite{ALTa}, \cite{Vsym82}) is of basic importance.

\begin{definition}
Let $f,g\in L^{1}_{loc}(\R^{N})$.
We say that $f$ is less concentrated than $g$, and we write
$f\prec
g$ if for
all $r>0$ we get
\[
\int_{B_{r}(0)}f^\#(x)\,dx\leq \int_{B_{r}(0)}g^\#(x)\,dx.
\]
\end{definition}
The partial order relationship $\prec$ is called \emph{comparison of mass concentrations}.
Of course, this definition can be suitably adapted if $f,g$ are defined in an open set $\Omega$ (considering the extension to zero outside $\Omega$). Moreover, we have that
$f\prec g$ if and only if
\[
\int_{0}^{\sigma}f^{\ast}(\tau)\,d\tau\leq \int_{0}^{\sigma}g^{\ast}(\tau)\,d\tau,
\]
for all $\sigma\geq0$.

The comparison of mass concentrations enjoys some nice equivalent formulations
(for the proof we refer to
\cite{Chong}, \cite{ALTa}, \cite{VANS05}).

\begin{lemma}\label{lemma1}
Let $f,g\in L_+^{1}(\Omega)$. Then the following are equivalent:

\vskip7pt
\noindent(i) $f\prec g$;
\vskip7pt

\noindent(ii) for all $\phi\in L^\infty_+(\Omega)$,
$$\int_{\Omega}f(x)\phi(x)\,dx\leq \int_{\Omega^\#}f^\#(x)\phi^\#(x)\,dx.
$$
\vskip7pt

\noindent(iii) for all convex, nonnegative functions $\Phi:[0,\infty)\rightarrow [0,\infty)$ with $\Phi(0) = 0$ it holds
$$\int_{\Omega}\Phi(f(x))\,dx\leq \int_{\Omega}\Phi(g(x))\,dx.
$$
%
\end{lemma}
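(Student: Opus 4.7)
The plan is to prove the equivalences by establishing $(\mathrm{i})\Leftrightarrow(\mathrm{ii})$ and $(\mathrm{i})\Leftrightarrow(\mathrm{iii})$. I will read $(\mathrm{ii})$ with $g^\#$ in place of the printed $f^\#$ on the right-hand side, which appears to be the intended formulation (otherwise $(\mathrm{ii})$ would reduce to the Hardy--Littlewood inequality \eqref{HardyLit} and would not involve $g$). The core tools are \eqref{HardyLit}, the layer-cake formula, and the integral decomposition of convex functions vanishing at the origin.

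For $(\mathrm{i})\Rightarrow(\mathrm{ii})$, I would first invoke \eqref{HardyLit} to reduce to proving $\int_{\Omega^\#} f^\#\phi^\#\,dx \le \int_{\Omega^\#} g^\#\phi^\#\,dx$. Writing $\phi^\#$ through the layer-cake formula as $\int_0^\infty \chi_{B_{r(t)}(0)}\,dt$ -- the super-level sets $\{\phi^\#>t\}$ are balls because $\phi^\#$ is radially nonincreasing -- Fubini reduces the claim to the family of inequalities $\int_{B_{r(t)}}f^\#\,dx \le \int_{B_{r(t)}}g^\#\,dx$ for a.e.\ $t>0$, which is exactly $(\mathrm{i})$. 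Conversely, for $(\mathrm{ii})\Rightarrow(\mathrm{i})$, I would test against $\phi=\chi_E$, where $E\subset\Omega$ is a super-level set of $|f|$ with $|E|=\omega_N r^N$: then $\phi^\#=\chi_{B_r(0)}$ and the equimeasurability of $f$ and $f^*$ yields $\int_\Omega f\chi_E\,dx = \int_{B_r}f^\#\,dx$, so $(\mathrm{ii})$ produces $(\mathrm{i})$ (possible jumps of $\mu_f$ are handled by a standard approximation in $r$).

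For $(\mathrm{i})\Rightarrow(\mathrm{iii})$, I would use the integral representation of a convex $\Phi\ge 0$ with $\Phi(0)=0$,
\[
\Phi(t) = \Phi'(0^+)\,t + \int_0^\infty (t-s)_+\,d\nu(s),
\]
where $\nu$ is the nonnegative Borel measure on $[0,\infty)$ associated with the nondecreasing right-derivative $\Phi'$. Applying Fubini reduces matters to the two bounds $\|f\|_{L^1(\Omega)}\le\|g\|_{L^1(\Omega)}$ (obtained by taking $\sigma=|\Omega|$ in $(\mathrm{i})$) and $\int_\Omega(f-s)_+\,dx\le\int_\Omega(g-s)_+\,dx$ for all $s\ge 0$; the latter follows from the Legendre-type identity
\[
\int_\Omega (f-s)_+\,dx = \sup_{\sigma>0}\bigl[F(\sigma) - s\sigma\bigr], \qquad F(\sigma):=\int_0^\sigma f^*(\tau)\,d\tau,
\]
together with the analogous identity for $g$, since $F\le G$ pointwise by $(\mathrm{i})$ propagates to the transforms. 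The converse $(\mathrm{iii})\Rightarrow(\mathrm{i})$ is the cleanest: testing $(\mathrm{iii})$ with the admissible functions $\Phi(t)=(t-s)_+$ gives $\sup_\sigma[F(\sigma)-s\sigma]\le\sup_\sigma[G(\sigma)-s\sigma]$ for every $s\ge 0$, and inverting the Legendre transform (legitimate since $F$ and $G$ are concave, nondecreasing, and vanish at the origin) recovers $F\le G$, i.e., $(\mathrm{i})$.

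The main subtlety I anticipate is in $(\mathrm{i})\Rightarrow(\mathrm{iii})$, where the integral representation of $\Phi$ and the Fubini exchange must be justified under the sole hypotheses $\Phi\ge 0$, $\Phi(0)=0$, and convexity, without any growth assumption. This can be addressed by approximating $\Phi$ from below by truncated convex functions $\Phi_n$ whose associated measures $d\nu_n$ have compact support, applying the above argument to each $\Phi_n$, and passing to the limit via monotone convergence.
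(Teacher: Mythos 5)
Your proof is correct, and you are right that the printed $f^\#$ on the right-hand side of (ii) must be a typo for $g^\#$: as written (ii) would be an instance of the Hardy--Littlewood inequality \eqref{HardyLit}, which holds unconditionally and does not mention $g$. Note that the paper does not actually give a proof of this lemma; it refers the reader to the literature, so there is no paper argument to compare with. Your three implications are the standard ones from majorization theory (Hardy--Littlewood--P\'olya), and each step checks out: the layer-cake/Fubini argument for $(\mathrm{i})\Rightarrow(\mathrm{ii})$ uses that the super-level sets of $\phi^\#$ are balls; the choice $\phi=\chi_E$ with $E$ a super-level set of $f$ for $(\mathrm{ii})\Rightarrow(\mathrm{i})$ is the usual saturation of Hardy--Littlewood (as you note, one needs a small adjustment on flat parts of $f$, e.g.\ enlarging a strict super-level set by a portion of $\{f=t_0\}$ to hit an arbitrary prescribed measure $\omega_N r^N$); and the identity $\int_\Omega (f-s)_+\,dx=\sup_{\sigma}[F(\sigma)-s\sigma]$ with $F(\sigma)=\int_0^\sigma f^*$ concave and nondecreasing makes the equivalence between $(\mathrm{i})$ and $(\mathrm{iii})$ a clean concave-conjugate duality once one observes that the slope at $\sigma=|\Omega|$ must also be captured (which you get from $\sigma=|\Omega|$ in $(\mathrm{i})$ for the $\Phi'(0^+)t$ term). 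Your final remark about the possible divergence of the integral representation is well taken; the monotone-convergence/truncation fix is exactly what is needed when $\int_\Omega\Phi(g)\,dx=\infty$ is not assumed finite a priori. An alternative to the conjugate-duality step is the direct observation that $\int_\Omega(f-s)_+\,dx=\int_0^{\mu_f(s)}(f^*(\tau)-s)\,d\tau$ and a short one-variable comparison of the concave functions $F$ and $G$, but your Legendre-type packaging is equivalent and arguably cleaner.
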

We explicitly observe that, if $f, g\in L^p(\Omega)$ $(1 < p \le\infty)$, then we may take $\phi \in L^{p'}(\Omega)$ in the point (ii) above.

From this Lemma it easily follows that if $f\prec g$,
then
\begin{equation}
\|f\|_{L^{p}(\Omega)}\leq \|g\|_{L^{p}(\Omega)}\quad \forall p\in[1,\infty].
\end{equation}

{
\subsection{Functional spaces and some computations for radial functions}

It is well known that for $s\in(0,1)$ the fractional Laplacian of a smooth real function $u$ on $\R^N$ can be equivalently defined as a pseudodifferential operator by means of
$$(-\Delta)^su=\mathcal{F}^{-1}\big(|\xi|^{2s}\mathcal{F}(u)(\xi)\big).
$$
where $\mathcal{F}$ is the Fourier transform, and in terms of a hypersingular integral
\[
(-\Delta)^su(x)=\gamma(N,s)\;\text{P.V.} \int_{\R^N}\frac{u(x)-u(y)}{|x-y|^{N+2s}}dy,
\]
where the explicit value of the normalization constant $\gamma(N,s)$ is given by
\begin{equation}
\gamma(N,s)=\frac{s2^{2s}\Gamma\big(\frac{N+2s}2\big)}{\pi^{\frac N2}\Gamma(1-s)}.\label{costante}
\end{equation}

In general, we can define $(-\Delta)^su$ in the distributional sense when $u$ has a strong enough decay at infinity, \emph{e.g.}, when $u$ belongs to the weighted space (see for instance \cite{Silvestre-thesis})
$$L_s(\R^N)=\left\{u:\R^N \rightarrow\R\text{ such that } \int_{\R^N} \frac{|u(x)|}{1+|x|^{N+2s}} dx<+\infty\right\}.
$$
If $\Omega$ is an open set of $\R^N$ and $s\in (0,1)$ we introduce the fractional Sobolev space $H^{s}(\Omega)$, defined as
\[
H^{s}(\Omega)=\left\{u\in L^{2}(\Omega):\,[u]_{H^{s}(\Omega)}<\infty\right\},
\]
where $[\cdot]_{H^{s}(\Omega)}$ is the Gagliardo seminorm
\[
[u]_{H^{s}(\Omega)}=\left(\int_{\Omega}\int_{\Omega}\frac{|u(x)-u(y)|^{2}}{|x-y|^{N+2s}}dx\,dy\right)^{1/2}.
\]
We have that $H^{s}(\Omega)$ is a Hilbert space w.r. to the scalar product
\[
(u,v)_{H^{s}(\Omega)}=(u,v)_{L^{2}(\Omega)}+\left(\int_{\Omega}\int_{\Omega}\frac{(u(x)-u(y))(v(x)-v(y))}{|x-y|^{N+2s}}dx\,dy\right)^{1/2}.
\]
When $\Omega=\R^{N}$ one can prove that (see for instance \cite{hitch}) $H^{s}(\R^N)=\widehat H^{s}(\R^N)$, where
$$\widehat H^{s}(\R^N)=\left\{u\in L^2(\R^N):\int_{\R^N}(1+|\xi|^{2s})|\hat u(\xi)|^2d\xi<+\infty\right\},$$
so that
$$\|u\|_{H^s(\R^N)}=\|u\|_{L^2}+\|(-\Delta)^{s/2}u\|_{L^2}.
$$
We now introduce the fractional Sobolev spaces where weak solutions to to problems of the type \eqref{eq.0} are naturally settled.
For a bounded Lipschitz domain $\Omega$ and $s\in (0,1)$ we denote by $H_{0}^{s}(\Omega)$ the closure of $C_{0}^{\infty}(\Omega)$ w.r. to the $H^{s}(\Omega)$ norm, furthermore we define the interpolation space
$$H_{00}^{1/2}(\Omega)=\left\{u\in H^{1/2}(\Omega):\int_{\Omega}\frac{u^{2}(x)}{d^2(x)}dx<\infty\right\}$$ with $d(x)=\text{dist}(x,\partial\Omega)$.\\
Since the fractional Laplacian $(-\Delta)^{s}$ will be evaluated on functions $u$ compactly supported in $\overline{\Omega}$ (because of the homogeneous boundary conditions), the domain of the $(-\Delta)^{s}$ (which is often called \emph{restricted fractional Laplacian} on $\Omega$) will be
\[
{\cal H}^{s}(\Omega)=\left\{u\in H^{s}(\R^{N}):\text{supp}( u)\subset\overline{\Omega}\right\}
\]
which can be identified as follows (see \cite{SirBonfVaz}):
\begin{equation}
{\cal H}^{s}(\Omega)=
\left\{
\begin{array}
[c]{lll}%
H^{s}(\Omega) &  & if\,0<s<1/2,\\[6pt]
H_{00}^{1/2}(\Omega) &  & if\,s=1/2\\[6pt]
H_{0}^{s}(\Omega) &  & if\,1/2<s<1.
\end{array}
\right.  \label{spaceH}%
\end{equation}
Now we recall some results concerning the representation of Fourier transform and of fractional Laplacian applied to a radial function.
The following result can be found in \cite[Theorem 40 and  Ch. IV, $\mathsection$ 5]{BC}.

\begin{theorem} [Fourier-Bessel representation] \label{bessel}
Let $u(x) = u(|x|)$ be a radial function, and suppose that
$$\tau \rightarrow \tau^{N} u(\tau)J_{\frac N2-1} (\tau) \in L^{1}(\R^{+}),$$
where $J_{\frac N2-1}(t)$ denotes the Bessel function of order $\frac N2-1$. Then, the Fourier transform of $u$ is given by
$$\mathcal{F}(u)(\xi)=2\pi|\xi|^{-\frac N2+1}\int_{0}^{+\infty}\tau^{\frac N2}u(\tau)J_{\frac N2-1} (2\pi|\xi|\tau)\,d\tau.$$
Furthermore, if $u\in L^2(\R^N)$, then formula above remains valid in $L^2(\R^N)$.
\end{theorem}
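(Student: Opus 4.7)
The plan is to reduce the defining integral of $\mathcal{F}(u)$ to polar coordinates, to isolate the angular integral over $\S^{N-1}$ (which by rotational invariance depends only on $|\xi|$), and to recognize that angular integral as a Bessel function of order $N/2-1$ via a classical identity.

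Writing $x=\tau\omega$ with $\tau=|x|>0$ and $\omega\in\S^{N-1}$, we have $dx=\tau^{N-1}\,d\tau\,d\sigma(\omega)$, and the Fourier transform formally takes the form
\[
\mathcal{F}(u)(\xi)=\int_0^{+\infty}\tau^{N-1}u(\tau)\left(\int_{\S^{N-1}}e^{-2\pi i\tau\omega\cdot\xi}\,d\sigma(\omega)\right)d\tau.
\]
The inner integral depends only on $t:=2\pi\tau|\xi|$ by rotation invariance of $d\sigma$. The key input, which I would either cite or prove, is the classical identity
\[
\int_{\S^{N-1}}e^{-it\omega\cdot\nu}\,d\sigma(\omega)=(2\pi)^{N/2}\,t^{-(N/2-1)}\,J_{N/2-1}(t),\qquad \nu\in\S^{N-1},\ t>0.
\]
This can be derived either from the Funk--Hecke formula, or directly by splitting $\omega=s\nu+\sqrt{1-s^2}\omega'$ with $\omega'\in\S^{N-2}$ perpendicular to $\nu$, integrating out $\omega'$ to produce a one-dimensional integral of the form $c_N\int_{-1}^{1}e^{-its}(1-s^2)^{(N-3)/2}ds$, and recognising it as a scalar multiple of the Poisson integral representation of $J_{N/2-1}$.

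Substituting back with $t=2\pi\tau|\xi|$ and $\nu=\xi/|\xi|$ and collecting the powers of $2\pi$ and $\tau$ gives the stated formula. The Fubini exchange needed to move from the defining integral of $\mathcal{F}(u)$ to the iterated form is justified by the standing hypothesis $\tau\mapsto\tau^{N}u(\tau)J_{N/2-1}(\tau)\in L^{1}(\R^+)$: using the standard bounds $|J_{N/2-1}(t)|\leq C\min\{t^{N/2-1},t^{-1/2}\}$, the iterated absolute integral is controlled. For the $L^2$ extension, the pointwise integrability hypothesis is not automatic, so the identity must be read as an equality of $L^2$-functions. The plan there is a density argument: radial Schwartz functions are dense in the radial subspace of $L^2(\R^N)$ and satisfy the pointwise hypothesis, so the formula holds for them pointwise. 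The left-hand side is continuous in $L^2$ because $\mathcal{F}$ is an isometry; the right-hand side is (up to an obvious change of variable) a Hankel transform of order $N/2-1$, which is also an $L^2$-isometry on the radial $L^2$-space identified with $L^2(\R^+,\tau^{N-1}d\tau)$. Both sides are bounded operators that agree on a dense subspace, hence everywhere.

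The main obstacle is the spherical-integral-to-Bessel identity in the second paragraph: it carries essentially all the analytic content, while the rest is bookkeeping with constants and standard Fubini/density arguments. A secondary delicate point is the precise normalisation of the Hankel transform as an $L^{2}$-isometry, which must be tracked carefully to close the density argument in the last step.
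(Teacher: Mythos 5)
The paper does not prove this theorem; it states it as a classical result and refers to Bochner--Chandrasekharan \cite{BC} (Theorem~40 and Ch.~IV, \S 5). So there is no ``paper proof'' to compare to, and your sketch has to stand on its own.

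Your argument is the standard textbook derivation (polar decomposition plus the spherical-measure-to-Bessel identity), and the essential constant checks out: with the convention $\widehat{d\sigma}(\xi)=2\pi|\xi|^{1-N/2}J_{N/2-1}(2\pi|\xi|)$ one indeed gets $\int_{\S^{N-1}}e^{-it\omega\cdot\nu}\,d\sigma(\omega)=(2\pi)^{N/2}t^{1-N/2}J_{N/2-1}(t)$, and substituting $t=2\pi\tau|\xi|$ and collecting powers of $2\pi$ reproduces the displayed formula exactly. The $L^2$ extension by density against the Hankel transform is also the right mechanism.

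One point deserves a flag. You justify the Fubini exchange ``by the standing hypothesis $\tau^{N}u(\tau)J_{N/2-1}(\tau)\in L^{1}(\R^+)$,'' but what Fubini actually needs is $\int_0^\infty \tau^{N-1}|u(\tau)|\,d\tau<\infty$, i.e.\ $u\in L^1(\R^N)$, since the inner angular integral is bounded by $|\S^{N-1}|$. Near $\tau=0$ the stated hypothesis controls only $\tau^{3N/2-1}|u(\tau)|$ (because $J_{N/2-1}(\tau)\sim c\,\tau^{N/2-1}$), which for $N\ge 2$ is strictly weaker than the $\tau^{N-1}|u(\tau)|$ you need; so the implication you assert does not actually go through as written. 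In fact, the hypothesis in the original \cite{BC} Theorem~40 is $\tau^{N-1}u(\tau)\in L^1(\R^+)$, i.e.\ $u\in L^1(\R^N)$, and the paper's statement appears to mis-transcribe it. If you rephrase your Fubini justification to invoke $u\in L^1(\R^N)$ directly (which is the natural reading and the hypothesis in the cited source), the proof is correct.
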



The following result gives the expression of the fractional Laplacian in radial coordinates and it can be found in \cite{garo}.

\begin{theorem} \label{bessela}
Let $u(x) = u(|x|)$ be a radial function. Then
$$(-\Delta)^{s}u(x) = \frac{(2\pi)^{2s+2}}{|x|^{\frac N2-1}}\int_{0}^{+\infty}
\rho^{1+2s}J_{\frac N2-1} (2\pi|x|\rho)\left(\int_{0}^{+\infty}\sigma^{\frac N2}u(\sigma)J_{\frac N2-1} (2\pi\rho\sigma)\,d\sigma\right)d\rho
$$
provided that the integrals exist and are convergent.
\end{theorem}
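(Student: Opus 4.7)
My plan is to reduce everything to one-dimensional Hankel-type integrals by invoking the Fourier--Bessel formula of Theorem \ref{bessel} twice: once to compute $\mathcal{F}(u)$ and once to invert it for the radial function $|\xi|^{2s}\mathcal{F}(u)$. The starting point is the spectral definition
\[
(-\Delta)^{s}u=\mathcal{F}^{-1}\!\big(|\xi|^{2s}\mathcal{F}(u)\big),
\]
interpreted with the $2\pi$-normalization compatible with Theorem \ref{bessel}; under that convention the symbol of the Laplacian produces an overall factor $(2\pi)^{2s}$, which will account for part of the final constant.

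First, because $u$ is radial, $\mathcal{F}(u)$ is radial as well, and Theorem \ref{bessel} yields
\[
\mathcal{F}(u)(\xi)=2\pi\,|\xi|^{-\frac{N}{2}+1}\int_{0}^{+\infty}\sigma^{\frac{N}{2}}u(\sigma)\,J_{\frac{N}{2}-1}(2\pi|\xi|\sigma)\,d\sigma.
\]
Next, $|\xi|^{2s}\mathcal{F}(u)(\xi)$ is again radial, so its inverse Fourier transform can be computed by the same Fourier--Bessel representation: indeed the spherical average of $e^{\pm2\pi i x\cdot\xi}$ over $\{|\xi|=\rho\}$ yields the same Bessel kernel $J_{\frac{N}{2}-1}(2\pi|x|\rho)$ regardless of the sign, so $\mathcal{F}^{-1}$ and $\mathcal{F}$ agree on radial functions. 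Applying this observation with the additional weight $(2\pi\rho)^{2s}$ gives
\[
(-\Delta)^{s}u(x)=2\pi\,|x|^{-\frac{N}{2}+1}\int_{0}^{+\infty}\rho^{\frac{N}{2}}\,(2\pi\rho)^{2s}\,\mathcal{F}(u)(\rho)\,J_{\frac{N}{2}-1}(2\pi|x|\rho)\,d\rho.
\]

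Finally, I substitute the Bessel representation of $\mathcal{F}(u)(\rho)$ obtained in the first step and collect: the powers of $\rho$ combine as $\rho^{N/2}\cdot\rho^{2s}\cdot\rho^{-N/2+1}=\rho^{2s+1}$, and the constants combine as $2\pi\cdot 2\pi\cdot(2\pi)^{2s}=(2\pi)^{2s+2}$, reproducing the stated formula exactly. The only real obstacle is book-keeping: I have to justify the interchange of the $\rho$- and $\sigma$-integrals by Fubini and check that Theorem \ref{bessel} is applicable to the intermediate radial function $\rho\mapsto\rho^{2s}\mathcal{F}(u)(\rho)$. Precisely this is the content of the assumption ``provided that the integrals exist and are convergent'', so modulo these convergence verifications the argument is a direct double substitution and requires no additional analytic input beyond Theorem \ref{bessel}.
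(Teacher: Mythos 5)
Your derivation is correct and is the standard way to obtain this formula; the paper itself does not prove Theorem \ref{bessela} but simply cites it from \cite{garo}, so there is no internal proof to compare against. You correctly identified the one subtle point, namely that with the Fourier--Bessel normalization of Theorem \ref{bessel} the multiplier of $(-\Delta)^{s}$ is $(2\pi|\xi|)^{2s}$ rather than $|\xi|^{2s}$, and this is exactly what supplies the missing power of $2\pi$; the subsequent bookkeeping $\rho^{N/2}\cdot\rho^{2s}\cdot\rho^{-N/2+1}=\rho^{2s+1}$ and $2\pi\cdot 2\pi\cdot(2\pi)^{2s}=(2\pi)^{2s+2}$ checks out, and since you substitute $\mathcal{F}(u)(\rho)$ directly into the outer Hankel integral (rather than swapping integrals), no Fubini step is actually required beyond the stated convergence hypothesis.
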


}

\subsection{Some properties of hypergeometric functions}\label{Hyperg}

We now recall a few properties of the hypergeometric function $_{2}F_1(a,b;c;x)$ (see, for example, \cite[Ch. II]{magnus}), which, for $c>b>0$ and $0<\tau<1$, can be represented as
\begin{equation}\label{represent}
_{2}F_1(a,b;c;x)=\frac{\Gamma(c)}{\Gamma(b)\Gamma(c-b)}\int_0^1\tau^{b-1}(1-\tau)^{c-b-1}(1-x\tau)^{-a}d\tau
\end{equation}
These properties will turn out useful in the proof of the main Theorem, namely Theorem \ref{main}.
Classical results about the derivatives of $_{2}F_1(a,b;c;x)$ read as
\begin{align*}\allowdisplaybreaks
_{2}F_1'(a,b;c;x)&=\frac {ab}c {\ }_{2}F_1(a+1,b+1;c+1;x),\displaybreak[1]\\ \\
_{2}F_1(a+1,b;c+1;x)&=\frac c{c-b} {\ }_{2}F_1(a,b;c;x)-\frac c{c-b}\frac{1-x}a{\ }_{2}F_1'(a,b;c;x),\displaybreak[1]\\ \\
_{2}F_1(a-1,b;c-1;x)&=\frac {c-1-bx}{c-1}{\ }_{2}F_1(a,b;c;x)+\frac{x(1-x)}{c-1}{\ }_{2}F_1'(a,b;c;x).
\end{align*}
A straightforward consequence of the above equalities is the following one:
\begin{equation}\label{formulona}
_{2}F_1'(a,b;c;x)=\frac {ab}{c}{\ }_{2}F_1(a+1,b;c+1;x)+\frac {ax}{c}{\ }_{2}F'_1(a+1,b;c+1;x).
\end{equation}
We also recall the following equality which holds true for $b>0$ and $|x|<1$.
\begin{equation}\label{hyper}
\int_{0}^{\pi}\frac{\sin^{2b-1}\theta}{(1-2x\cos\theta+x^{2})^{a}}d\theta=\frac{\sqrt\pi\;\Gamma(b)}{\Gamma(b+\frac12)}{\ }_{2}F_1(a,a-b+\tfrac12;b+\tfrac12;x^{2}).
\end{equation}

We finally recall that a direct computation in \eqref{represent} gives
\begin{equation}\label{zero}
{}_{2}F_{1}(a,b;c;0)=1,
\end{equation}
and, when $c>a+b$ a formula due to Gauss states
\begin{equation}\label{gauss}
{}_{2}F_{1}(a,b;c;1)={\frac {\Gamma (c)\Gamma (c-a-b)}{\Gamma (c-a)\Gamma (c-b)}}.
\end{equation}
The above information, together with the equality
\begin{equation}\label{linear}
{}_{2}F_{1}(a,b;c;x)=(1-x)^{c-a-b}
{}_{2}F_{1}(c-a,c-b;c;x),
\end{equation}
will be useful in order to establish some asymptotic behaviours.

{
\section{Main results and remarks}\label{Sec3}
We are now in position to give the
following definition of weak solution to problems of the type \eqref{eq.0} (see \cite{ROSOTONSURVEY})
\begin{definition}\label{weaksol}
Let $f\in L^{p}(\Omega)$, for some $p\geq 2N/(N+2s)$ when $N\geq2$ and any $p>1$ for $N=1$. A weak solution to problem \eqref{eq.0} is a function $u\in{\cal H}^{s}(\Omega)$ such that the equality
\begin{equation}
(u,\varphi)_{{\cal H}^s(\Omega)}:=\frac{\gamma(N,s)}{2}\int_{\R^{N}}\int_{\R^{N}}\frac{\left(u(x)-u(y)\right)\left(\varphi(x)-\varphi(y))\right)}{|x-y|^{N+2s}}dx\,dy=
\int_{\Omega}f(x)\,\varphi(x)\,dx\label{weakform}
\end{equation}
holds for all test functions $\varphi\in {\cal H}^{s}(\Omega)$ .
\end{definition}
It is clear that the bilinear form $(\cdot,\cdot)_{{\cal H}^{s}(\Omega)}$ verifies the classical Lax-Milgram Theorem, thus a unique weak solution $u$ to problem \eqref{eq.0} exists because $f\in{\cal H}^{-s}(\Omega)=:({\cal H}^{s}(\Omega))^{\prime}$ due to the fractional Sobolev embeddings.
As regards important properties of solutions, such as maximum principles, regularity results or extensions to a wider class of operators we refer to \cite{ROSOTONSURVEY}. 
We also recall that the solution $v$ to the symmetrized problem \eqref{eq.1} is radially strictly decreasing, see for instance \cite[Theorem 1.1]{felmw} or \cite[Theorem 1.1]{Barrios}.

Now we can finally state the main result of this paper.

\begin{theorem}\label{main}
Let $s\in(0,1)$ and let $f\in L^{p}(\Omega)$, with $p\geq 2N/(N+2s)$ when $N\geq2$ and any $p>1$ for $N=1$. If $u$ is the weak solution to  problem $\eqref{eq.0}$ and $v$ is
the solution to the corresponding symmetrized problem \eqref{eq.1}, we have
\begin{equation}
u^\#(x)\prec v(x).\label{compariso}
\end{equation}
\end{theorem}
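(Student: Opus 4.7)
The plan is to adapt the Lions-style approach described in the introduction --- using the fractional P\'olya--Szeg\H{o} principle in place of isoperimetric inequality --- and then convert the rearranged inequality into a pointwise comparison by passing to cumulative masses interpreted as radial functions on $\R^{N+2}$.

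For the first step, I would test the weak formulation \eqref{weakform} against the truncation $\varphi = G_{t}(u):=(|u|-t)_{+}\,\mathrm{sgn}(u)$, which belongs to $\mathcal{H}^{s}(\Omega)$ for every $t>0$. The integrand $(u(x)-u(y))(G_{t}(u)(x)-G_{t}(u)(y))$ has the form $F(u(x),u(y))$ for a nonnegative continuous $F$ that is straightforward to check is supermodular in the sense of \eqref{F}. Applying the generalized Riesz inequality (Theorem 2.1 above), with the singular weight $|z|^{-N-2s}$ replaced by a truncation $W_{\varepsilon}\in L^{1}(\R^{N})$ and $\varepsilon\to 0^{+}$ by monotone convergence, together with Hardy--Littlewood \eqref{HardyLit} on the right-hand side, produces the fractional analogue of \eqref{first}:
\[
\frac{\gamma(N,s)}{2}\int_{\R^{N}}\!\!\int_{\R^{N}}\!\!\frac{\bigl(u^{\#}(x)-u^{\#}(y)\bigr)\bigl(G_{t}(u^{\#})(x)-G_{t}(u^{\#})(y)\bigr)}{|x-y|^{N+2s}}\,dx\,dy \;\leq\; \int_{\Omega^{\#}}\!\!f^{\#}(x)\,G_{t}(u^{\#})(x)\,dx.
\]

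The second step, the crucial one, is the dimensional lift. I would differentiate in $t$ and recast the resulting inequality in terms of the cumulative mass $U(r):=\int_{B_{r}(0)}u^{\#}(y)\,dy$. Using the Fourier--Bessel representation of Theorem \ref{bessel}, the radial formula for $(-\Delta)^{s}$ in Theorem \ref{bessela}, and the hypergeometric identities collected in Subsection \ref{Hyperg} --- in particular the differentiation formula \eqref{formulona} and the integral identity \eqref{hyper}, which together relate $J_{(N-2)/2}$ on $\R^{N}$ to $J_{N/2}$ on $\R^{N+2}$ --- the left-hand side is identified with $(-\Delta)^{s}_{\R^{N+2}}U$ evaluated via the natural radial extension of $U$ to $N+2$ dimensions. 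Performing the analogous computation for the radial solution $v$ (radiality by \cite{felmw,Barrios}) of the symmetrized problem \eqref{eq.1}, for which the first step gives an \emph{equality}, and setting $V(r):=\int_{B_{r}(0)}v(y)\,dy$, one arrives at the pointwise differential inequality
\[
(-\Delta)^{s}_{\R^{N+2}}\bigl(U-V\bigr)(x) \;\leq\; 0 \quad \text{on } \Omega^{\#},
\]
together with the behaviour of $U-V$ outside $\Omega^{\#}$ (both $U$ and $V$ are bounded and radially nondecreasing, saturating at the total mass). A maximum principle for $(-\Delta)^{s}_{\R^{N+2}}$ then yields $U\leq V$ everywhere, which is precisely \eqref{compariso}.

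The main obstacle will be Step 2: recognizing the rearranged Dirichlet form of $u^{\#}$ in $\R^{N}$ as the $s$-Laplacian on $\R^{N+2}$ of the cumulative mass $U$. The hypergeometric machinery of Subsection \ref{Hyperg} does the heavy lifting here, via the transition from Bessel order $(N-2)/2$ to Bessel order $N/2$ implicit in \eqref{formulona}--\eqref{hyper}, supplemented by the asymptotic information \eqref{zero}--\eqref{linear} needed to control the relevant integrals near the origin and near infinity. A secondary technical point is the regularization of the non-integrable weight $|z|^{-N-2s}$ in Step 1 so as to fit the $W\in L^{1}(\R^{N})$ hypothesis of the generalized Riesz inequality.
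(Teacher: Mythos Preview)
Your strategy coincides with the paper's: fractional P\'olya--Szeg\H{o} via the generalized Riesz inequality, a dimensional lift to $\R^{N+2}$, and a maximum principle. Two points, however, would cause the argument to break as written.

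First, the object that lifts cleanly to $\R^{N+2}$ is not the cumulative mass $\int_{B_r}u^{\#}$ but the \emph{spherical mean}
\[
U(r)=\frac{1}{r^{N}}\int_0^{r} u^{\#}(\rho)\,\rho^{N-1}\,d\rho.
\]
The identity one actually obtains (via the Bessel-order shift $J_{(N-2)/2}\to J_{N/2}$, or equivalently via the recursion for the angular kernel $\Theta_{N,s}$ that \eqref{formulona} encodes) reads, for smooth radial $u^{\#}$,
\[
\int_0^{r}(-\Delta)^{s} u^{\#}(\tau)\,\tau^{N-1}\,d\tau \;=\; r^{N}\,(-\Delta)^{s}_{\R^{N+2}}U(r),
\]
and the factor $r^{N}$ is exactly what converts the mass into the mean. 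This is decisive for the closing step: the spherical mean tends to $0$ as $r\to\infty$, so the maximum principle on the whole space applies directly to $W=U-V$; your cumulative mass merely saturates, and ``saturation at the total mass'' does not yield $U-V\le 0$ at infinity, since $\|u\|_{L^{1}}$ and $\|v\|_{L^{1}}$ are not known \emph{a priori} to coincide.

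Second, the inequality $(-\Delta)^{s}_{\R^{N+2}}U\le (-\Delta)^{s}_{\R^{N+2}}V$ must hold for \emph{every} $r>0$, not only on a ball. After differentiating in $t$ you obtain it only for radii in the range of $t\mapsto r(t)$. The paper closes this gap separately: on intervals corresponding to flat zones of $u^{\#}$ (where $r(t)$ jumps) the left side is convex in $\sigma=r^{N}$ while the right side is concave, and for $r$ beyond the support of $u^{\#}$ the left side is nonincreasing while the right side is constant. Only then can the global maximum principle on $\R^{N+2}$ be invoked. As a minor remark on Step~1, the paper avoids your $W_{\varepsilon}$ truncation by writing $|z|^{-N-2s}$ as a superposition of Gaussians and applying Riesz at each scale, and it uses the two-parameter truncation $\mathcal{G}_{t,h}$ followed by $h\to 0$ rather than differentiating in $t$, which makes the limit near the diagonal singularity more transparent.
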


Theorem \ref{main} has a certain number of interesting implications. For instance, \eqref{compariso} and property \eqref{lemma1} transfers the study of the $L^p$ regularity scale of the solution $u$ to \eqref{eq.0} to the same regularity for the solution $v$ to the radial problem \eqref{eq.1}.  The advantage of this step relies in the fact that $v$ can be written in the integral form in terms of the Green function on the ball, which is explicit. We can summarize all these considerations in the following result.
\begin{theorem}\label{regularity}
Let $N\geq2$, $f\in L^{p}(\Omega)$, with $p\geq 2N/(N+2s)$, and let $u$ be the weak solution to  problem $\eqref{eq.0}$. We have:
\begin{enumerate}
\item if $p<N/(2s)$ then $u\in L^{q}(\Omega)$, with
\begin{equation}
q=\frac{Np}{N-2sp}\label{q}
\end{equation}
and there exists a constant $C$ such that:
\[
\|u\|_{L^{q}(\Omega)}\leq C\|f\|_{L^{p}(\Omega)};
\]
\item if $p>N/(2s)$ then $u\in L^{\infty}(\Omega)$ and there exists a constant $C$ such that:
\[
\|u\|_{L^{\infty}(\Omega)}\leq C\|f\|_{L^{p}(\Omega)};
\]
\item if $p=N/(2s)$, then $u\in L_{\Phi_{p}(\Omega)}$ and there exists a constant $C$ such that:
\[
\|u\|_{L_{\Phi_{p}(\Omega)}}\leq C\|f\|_{L^{p}(\Omega)},
\]
where $L_{\Phi_{p}(\Omega)}$ is the Orlicz space generated by the $N$-function
\[
\Phi_{p}(t)=\exp(|t|^{p^{\prime}})-1.
\]
\end{enumerate}
\end{theorem}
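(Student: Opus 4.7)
\textbf{Proof plan for Theorem \ref{regularity}.} The overall strategy is to use Theorem \ref{main} to transfer every estimate to a calculation on the radial problem \eqref{eq.1}, whose solution $v$ admits an explicit Green function representation, and then to invoke classical mapping properties of Riesz potentials.

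The first step is the reduction. For any convex nonnegative $\Phi:[0,\infty)\to[0,\infty)$ with $\Phi(0)=0$, applying Lemma \ref{lemma1}(iii) to \eqref{compariso} yields $\int_\Omega \Phi(|u|)\,dx \le \int_{\Omega^\#}\Phi(v)\,dx$. Choosing $\Phi(t)=t^q$ with $q\in[1,\infty)$, and passing to the limit as $q\to\infty$, this gives
\begin{equation*}
\|u\|_{L^q(\Omega)}\le \|v\|_{L^q(\Omega^\#)}\qquad\forall\, q\in[1,\infty].
\end{equation*}
Analogously, choosing $\Phi=\Phi_p$ shows $\|u\|_{L_{\Phi_p}(\Omega)}\le\|v\|_{L_{\Phi_p}(\Omega^\#)}$. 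Hence it suffices to estimate $v$ in terms of $\|f^\#\|_{L^p(\Omega^\#)}=\|f\|_{L^p(\Omega)}$.

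The second step exploits the fact that $v$ solves \eqref{eq.1} on the ball $\Omega^\#$, so it admits the Green representation
\begin{equation*}
v(x)=\int_{\Omega^\#}G_s(x,y)\,f^\#(y)\,dy,
\end{equation*}
where $G_s$ is the Green function for the restricted fractional Laplacian on the ball, known explicitly (Blumenthal--Getoor--Riesz formula) and satisfying the global bound $0\le G_s(x,y)\le C_{N,s}|x-y|^{2s-N}$ on $\Omega^\#\times\Omega^\#$. Consequently $0\le v(x)\le C_{N,s}(I_{2s}\widetilde{f})(x)$, where $I_{2s}$ denotes the Riesz potential of order $2s$ and $\widetilde f$ is the extension of $f^\#$ by zero outside $\Omega^\#$.

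The third step is the case analysis. For $p<N/(2s)$, the Hardy--Littlewood--Sobolev inequality gives the continuity $I_{2s}:L^p(\R^N)\to L^q(\R^N)$ with $q$ as in \eqref{q}, proving item (1). For $p>N/(2s)$ one has $(N-2s)p'<N$, so the kernel $|x-y|^{2s-N}$ lies in $L^{p'}(\Omega^\#)$ uniformly in $x\in\Omega^\#$; H\"older's inequality then yields item (2). The borderline case $p=N/(2s)$ is the most delicate: item (3) follows from an Adams--Trudinger-type exponential integrability estimate for $I_{2s}$ acting on $L^{N/(2s)}$, which precisely produces the exponent $p'=N/(N-2s)$ appearing in $\Phi_p$. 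This critical case is the main obstacle, since it requires non-trivial rearrangement arguments of O'Neil type to identify the correct Young function; the sub- and supercritical cases are routine once Theorem \ref{main} is in place.
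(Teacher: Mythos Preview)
Your proposal is correct and follows essentially the same approach as the paper: reduce to the radial solution $v$ via the concentration comparison \eqref{compariso} and Lemma~\ref{lemma1}, bound $v$ by the Riesz potential through the Green function estimate $G_s(x,y)\le C|x-y|^{2s-N}$, and then apply Hardy--Littlewood--Sobolev, H\"older, and an O'Neil-type argument in the three respective regimes. The only cosmetic difference is that in the critical case the paper spells out the O'Neil convolution bound together with the characterization of $L_{\Phi_p}(\Omega)$ via $\sup_\sigma \psi^{**}(\sigma)/(1+\log(|\Omega|/\sigma))^{1/p'}$, whereas you package this as an Adams--Trudinger black box.
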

\begin{proof}
The proof follows some arguments of \cite[Theorems 4.3-4.4]{dBVol}, but we propose here the details for the sake of completeness.
We write
\[
v(x)=\int_{\Omega^{\#}}\mathsf{G}_{\Omega^{\#}}(x,y)f^{\#}(y)dy,
\]
where $\mathsf{G}_{\Omega^{\#}}$ is the Green function of the restricted fractional Laplacian on the ball (see \cite{BUCUR}). Since (see for instance \cite{Kul})
\begin{equation}
\mathsf{G}_{\Omega^{\#}}(x,y)\leq \frac{C}{|x-y|^{N-2s}}\label{Greenfuncest}
\end{equation}
for any $x\neq y$ in $\Omega^{\#}$. Then extending $f$ to 0 out of $\Omega$, Hardy-Littlewood-Sobolev inequality (see \cite{lloss}) implies, for $p<N/(2s)$,
\[
\|u\|_{L^{q}}\leq \|v\|_{L^{q}}\leq 
C \|f\|_{L^{p}}
\]
where $q$ is given by \eqref{q}. The case $p>(N/2s)$ is even easier, because
\begin{align*}
\|u\|_{L^{\infty}(\Omega)}&\leq \|v\|_{L^{\infty}(\Omega)}\leq C\int_{\Omega^{\#}}\frac{f^{\#}(y)}{|y|^{N-2s}}dy\\
&\leq C \|f\|_{L^{p}(\Omega)}\left(\int_{\Omega^{\#}}\frac{1}{|y|^{(N-2s)p^{\prime}}}dy\right)^{1/p^{\prime}}<\infty.
\end{align*}
The limit case is a bit more elaborate. Indeed, it can be proven that (see for instance \cite[Lemma 6.12]{Bennett})  $L_{\Phi_{p}(\Omega)}$ can be interpreted as the space of all measurable functions $\psi$ such that
\[
\sup_{\sigma\in(0,|\Omega|)}\frac{\psi^{\ast\ast}(\sigma)}{\left(1+\log\left(\frac{|\Omega|}{\sigma}\right)\right)^{1/p^{\prime}}}<\infty,
\]
where
\[
\psi^{\ast\ast}(\sigma)=\frac{1}{\sigma}\int_{0}^{\sigma}\psi^{\ast}(\tau)d\tau.
\]
Since (see \cite[Lemma 1.6]{oneil1963})
\[
(f^{\#}\ast|\cdot|^{2s-N})^{\ast\ast}(\sigma)\leq \int_{s}^{|\Omega|}\tau^{(2s/N)-N}f^{\ast\ast}(\tau)d\tau,
\]
and the fact that the $L^p$ norms of $f^{\ast}$ and $f^{\ast\ast}$ are equivalent (see \cite[Lemma 4.5]{Bennett})
an easy application of H\"older inequality and \eqref{compariso} provides
\begin{align*}
\frac{u^{\ast\ast}(\sigma)}{\left(1+\log(\frac{|\Omega|}{\sigma})\right)^{1/p^{\prime}}}&\leq C
\frac{(f^{\#}\ast|\cdot|^{2s-N})^{\ast\ast}(\sigma)}{\left(1+\log(\frac{|\Omega|}{\sigma})\right)^{1/p^{\prime}}}\\
&\leq C\frac{\left(\log\left(\frac{|\Omega|}{\sigma}\right)\right)^{1/p^{\prime}}}{\left(1+\log(\frac{|\Omega|}{\sigma})\right)^{1/p^{\prime}}}\|f\|_{L^{p}(\Omega)},
\end{align*}
which concludes the proof.
\end{proof}
\begin{remark}
For the sake of simplicity in the above theorem we have supposed $N\ge2$,
but similar arguments can be used also in case $N=1$. In such a case, if
$s<1/2$ estimate \eqref{Greenfuncest} still holds, thus Theorem \ref{regularity} remains true. If $s>1/2$ the Green function on a symmetric interval is bounded \cite[Corollary 3]{Bycz}, while in the special case $s=1/2$ the Green function is explicit (see Section \ref{counter}) and one has
\[
\mathsf{G}_{\Omega^{\#}}(x,y)\leq C\log\frac{1}{|x-y|}
\]
where we recall that $-(1/\pi)\log|x|$ is exactly the fundamental solution of $(-\Delta)^{1/2}$ for $N=1$. Thus for $s\leq1/2$ the solution $u$ is bounded for all $p>1$.
\end{remark}
It can be shown that optimal embedding results hold when the Lorentz spaces $L^{p,q}(\Omega)$ are introduced, see for such questions \cite{dBVol} and \cite{VOLZNONLINEAR}.\\[2pt]
\noindent Another interesting consequence of \eqref{compariso} is the estimate of the nonlocal energy of $u$ in terms of the one of $v$, in the spirit of \cite{Talenti1}.
\begin{proposition}\label{energies}
Under the assumptions of Theorem \eqref{main}, we have
\begin{equation}
[u]_{H^{s}(\R^{N})}\leq [v]_{H^{s}(\R^N)}.\label{energineq}
\end{equation}
\end{proposition}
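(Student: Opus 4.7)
The plan is to convert the variational formulations into energy identities by testing each weak equation against its own solution, and then reduce the sought inequality to the already established mass concentration comparison. Setting $\varphi=u$ in the weak formulation \eqref{weakform} produces the identity
$$\frac{\gamma(N,s)}{2}\,[u]_{H^s(\R^N)}^2 \;=\; \int_{\Omega} f(x)\,u(x)\,dx,$$
and the analogous choice $\varphi=v$ in the symmetrized problem \eqref{eq.1} gives
$$\frac{\gamma(N,s)}{2}\,[v]_{H^s(\R^N)}^2 \;=\; \int_{\Omega^\#} f^\#(x)\,v(x)\,dx.$$
Thus \eqref{energineq} will follow once I establish $\int_\Omega f(x)\,u(x)\,dx \le \int_{\Omega^\#} f^\#(x)\,v(x)\,dx$.

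I would split this key inequality into two steps. The first is a direct application of the Hardy--Littlewood inequality \eqref{HardyLit}: since $\int fu \le \int|f||u|$ and the rearrangements of $|f|$ and $|u|$ coincide with $f^\#$ and $u^\#$, one obtains
$$\int_\Omega f(x)\,u(x)\,dx \;\le\; \int_{\Omega^\#} f^\#(x)\,u^\#(x)\,dx.$$
The second step uses the comparison $u^\# \prec v$ granted by Theorem \ref{main}. Since $v$ is radially decreasing we have $v=v^\#$, so combining $u^\# \prec v$ with the equivalence recorded in Lemma \ref{lemma1}(ii), applied with the decreasing test function $\phi=f^\#$ (admissible by the $L^{p'}$ regularity scale from Theorem \ref{regularity}), yields
$$\int_{\Omega^\#} f^\#(x)\,u^\#(x)\,dx \;\le\; \int_{\Omega^\#} f^\#(x)\,v(x)\,dx.$$
Chaining the three displayed inequalities produces $[u]_{H^s(\R^N)}^2 \le [v]_{H^s(\R^N)}^2$, and \eqref{energineq} follows by taking square roots.

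Since the substantial work is already packaged in Theorem \ref{main}, no serious obstacle arises here: one only needs to verify the admissibility of the test functions $u$ and $v$ in their respective energy spaces (built into Definition \ref{weaksol}), and the integrability needed to invoke Lemma \ref{lemma1} with $\phi=f^\#$, which is guaranteed by the assumption $f\in L^p(\Omega)$. The argument is essentially the fractional analogue of Talenti's classical derivation of the energy comparison from the pointwise rearrangement inequality, with the mass concentration relation \eqref{compariso} playing the role that \eqref{comp} plays in the local case.
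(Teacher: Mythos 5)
Your proof is correct and follows essentially the same route as the paper: test each weak formulation with its own solution, apply the Hardy–Littlewood inequality, and then invoke the mass concentration comparison $u^\#\prec v$ via Lemma \ref{lemma1}(ii) with $\phi=f^\#$. The paper packages the three inequalities in a single chained display, but the argument is identical.
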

\begin{proof}
Simply inserting $u$ as test function in the weak formulation \eqref{weakform}, an employ of Hardy-Littlewood rearrangement inequality and Lemma \ref{lemma1} provides
\begin{align*}
\frac{\gamma(N,s)}{2}\int_{\R^{N}}\int_{\R^{N}}\frac{|u(x)-u(y)|^{2}}{|x-y|^{N+2s}}dx\,dy&=
\int_{\Omega}f(x)\,u(x)\,dx\\
&\leq \int_{\Omega^{\#}}f^{\#}(x)\,u^{\#}(x)\,dx\\
& \leq \int_{\Omega^{\#}}f^{\#}(x)\,v(x)\,dx\\
&= \frac{\gamma(N,s)}{2}\int_{\R^{N}}\int_{\R^{N}}\frac{|v(x)-v(y)|^{2}}{|x-y|^{N+2s}}dx\,dy.
\end{align*}
\end{proof}

\begin{remark}\label{Limit s}\emph{
It is interesting to guess, in the spirit of \cite{BREZBOURMIR}, \cite{Bourgain200277}, \cite{mazsha},  what happens when we want somehow to pass to the limit as $s\rightarrow 1$ in the energy inequality \eqref{energineq}. We observe that by \cite[Theorem 1.2]{BICCARI} the solution $u$ to problem \eqref{eq.0} for $s=1$, \emph{i.e.} the solution to the local Poisson equation with homogeneous boundary condition $u=0$ on $\partial \Omega$, can be seen as the weak limit of the family of solutions $u_{s}$ to \eqref{eq.0} for $s\in (0,1)$. The same property holds for the solution $v$ to the symmetrized problem \eqref{eq.1} for $s=1$ and $v=0$ on $\partial \Omega^{\#}$. Then applying  Proposition  \ref{energies} to each couple of solutions $u_{s},\,v_{s}$ we can use \cite[Theorem 1.2]{BICCARI} to pass to the limit as $s\rightarrow1$ in the inequality
\[
[u_{s}]_{H^{s}(\R^{N})}\leq [v_{s}]_{H^{s}(\R^{N})},
\]
and obtain
\[
\int_{\Omega}|\nabla u|^{2}dx\leq \int_{\Omega^{\#}}|\nabla v|^{2}dx ,
\]
thus \eqref{energineq} can be seen as the nonlocal version of Talenti's energy inequality.}
\end{remark}
}
\section{Counterexamples to the pointwise comparison}\label{counter}
One could ask if the comparison in terms of mass concentration could be improved to give a pointwise estimate. In order to understand if a result similar to the one proved by Talenti \cite{Talenti1} in the local case could be expected also in the non local one, we devote this Section to discuss some special cases which give a negative answer, showing that a pointwise estimate cannot hold and then, that our result is optimal.

In the case $N=1$, $s=\frac12$, $\Omega=(-1,1)$, the explicit solution to problem
\begin{equation}\label{c.1}
\left\{
\begin{array}
[c]{lll}%
( -\Delta)^{1/2}u=f & & \text{in }%
\Omega,\\
\\
u=0 & & \text{on }\R\setminus\Omega,
\end{array}
\right. %
\end{equation}
can be computed explicitly in various cases, making use of the Green function on $\Omega$ (see, \emph{e.g.}, \cite[Theorem 3.1]{BUCUR})
\begin{equation}
u(x)=\frac1\pi\int_{-1}^1f(x)\log\left(\frac{1-x\,y+\sqrt{(1-x^2)(1-y^2)}}{|x-y|}\right)dy.\label{explicitform}
\end{equation}
A direct computation shows that the solutions $u_{(1)}$ and $u_{(2)}$ to \eqref{c.1} corresponding to the source terms $f_{(1)}(x)=|x|$ and $f_{(2)}(x)=\chi_{\frac12<|x|<1}(x)$, respectively, are given by
\begin{align*}
\notag u_{(1)}(x)=&\frac1\pi\left(\sqrt{1-x^2}-x^2\log|x|+x^2\log\left(1+\sqrt{1-x^2}\right)\right)
\displaybreak[1]\\
\notag\\
\notag u_{(2)}(x)=&\frac1\pi\left(\frac{2\pi}3\sqrt{1-x^2}-\left(\frac12-x\right)\log\left(\frac{1-\frac x2+\frac{\sqrt3}2\sqrt{1-x^2}}{|x-\frac12|}\right)-\left(\frac12+x\right)\log\left(\frac{1+\frac x2+\frac{\sqrt3}2\sqrt{1-x^2}}{|x+\frac12|}\right)\right).
\end{align*}

On the other hand, observing that $\Omega^\#=(-1,1)$, $f^\#_{(i)}(x)=1-f_{(i)}(x)$, $i=1,2$, and that $\sqrt{1-x^2}$ solves the above problem when the source term is 1 (see \cite{getoor} and \cite{Dyda}), the solution $v_{(i)}$, $i=1,2$, to the symmetrized problem
\begin{equation*}
\left\{
\begin{array}
[c]{lll}%
( -\Delta)^{1/2}v_{(i)}=f_{(i)}^\# & & \text{in }%
\Omega^\#,\\
\\
v_{(i)}=0 & & \text{on }\R\setminus\Omega^\#,
\end{array}
\right. %
\end{equation*}
is given by
\begin{equation*}
v_{(i)}(x)=\sqrt{1-x^2}-u_{(i)}(x)
\end{equation*}
All the functions $u_{(i)}$ and $v_{(i)}$ 
are symmetric with respect to the origin and one can prove that in both cases the pointwise estimate \eqref{comp} does not hold. These functions are plotted in Figure 1 and Figure 2 where they correspond to $u_{s,i}$ and $v_{s,i}$ with $s=\frac12$.

In the case $i=1$ (a similar  analysis can be carried out when $i=2$) a direct computation shows that
\begin{align*}
u_{(1)}(x)\sim&\frac{2}\pi\sqrt{1-x^2}\qquad \text{as }x\rightarrow 1
\displaybreak[1]\\
\notag\\
\notag
v_{(1)}(x)\sim&\frac{\pi-2}\pi\sqrt{1-x^2}\qquad \text{as }x\rightarrow 1,
\end{align*}
so, for a suitable $\varepsilon>0$ it holds
\begin{equation}\label{a1}
u_{(1)}(x)>v_{(1)}(x),\qquad1-\varepsilon\le|x|\le1
\end{equation}
On the other hand, being $f_{(1)}$ a nonnegative (not identically zero) function, we have (see, \emph{e.g.}, \cite[Theorem 2.3.3]{BUCVALD}) $u_{(1)}(x)>0$ in $[-1+\varepsilon,1-\varepsilon]$. Let
\begin{equation*}
\bar t=\min_{[-1+\varepsilon,1-\varepsilon]}u_{(1)}(x)>0.
\end{equation*}
We observe that $v_{(1)}(x)$ is an even function which strictly decreases for $x\in(0, 1)$. If $v_{(1)}(x)< \bar t$ then \eqref{a1} holds true for $x\in(-1,1)$ and then $u^\#(x)>v(x)$ in $(-1,1)$, contradicting \eqref{comp}.

So, we can suppose that there exists a unique $\bar x\in(0,1)$ such that $v_{(1)}(\bar x)=\bar t$.
We claim that the two truncated functions
\begin{equation*}
\bar u_{(1)}(x)=\min\{\bar t,u_{(1)}(x)\},\qquad \bar v_{(1)}(x)=\min\{\bar t,v_{(1)}(x)\},
\end{equation*}
are such that
\begin{equation}\label{ubar}
\bar u_{(1)}(x)\ge \bar v_{(1)}(x),\qquad x\in\R,
\end{equation}
with strict inequality in a left neighborough of $x=1$ (and in a right neighborough of $x=-1$).

Indeed, considering the cases where $\bar x$ belongs to $(0,1-\varepsilon)$ or to $[1-\varepsilon,1)$, from the definition of $\bar t$ it follows:
\begin{equation*}
u_{(1)}(\bar x)\ge\bar t.
\end{equation*}
Moreover, for $x\in(0,\bar x)$, in view of \eqref{a1} and the radial monotonicity of $v_{(1)}$ we have that $u_{(1)}(x)\geq \bar t $, thus $\bar u_{(1)}(x)=\bar t=\bar v_{(1)}(x)$, $x\in(0,\bar x)$. Then \eqref{a1} allows to obtain \eqref{ubar}.

By the definition of distribution function we have by \eqref{ubar}
\begin{equation*}
\mu_{u_{(1)}}(t)=\mu_{\bar u_{(1)}}(t)>\mu_{\bar v_{(1)}}(t)=\mu_{v_{(1)}}(t), \qquad t\in(0,\bar t),
\end{equation*}
so, in a left neighborough of $x = 1$ (and in a right neighborough of $x = -1$), it holds
$$u_{(1)}^\#(x)>v_{(1)}(x),$$
which proves that the pointwise estimate
$$u_{(1)}^\#(x)\le v_{(1)}(x),$$
does not hold true in $(-1,1)$.

\begin{figure}[t]\label{figura}
\begin{picture}(500,300)
    \put(-20,180){\includegraphics[width=8 cm]{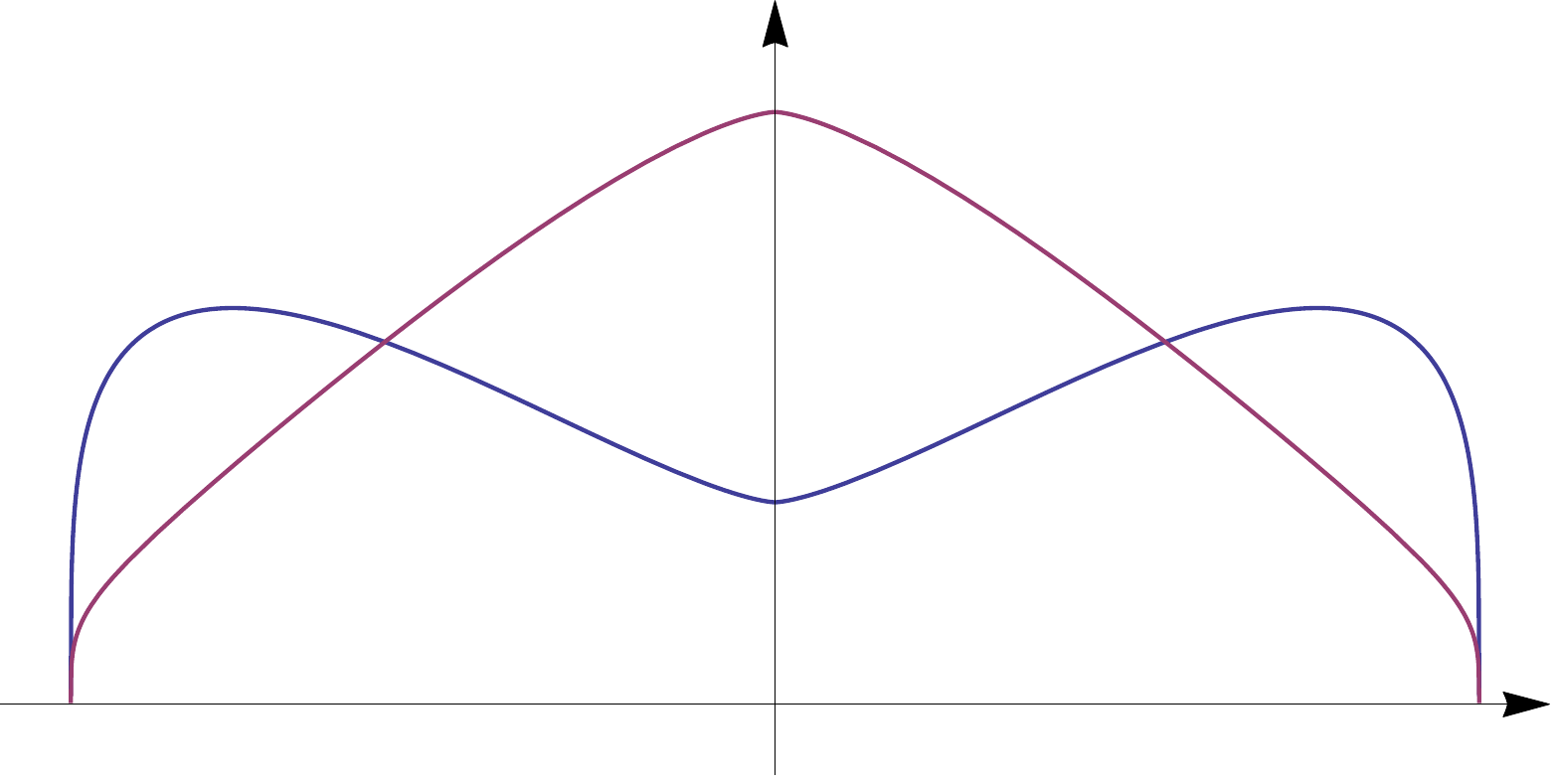}}
    \put(190,172){$1$}
        \put(90,170){$0$}
        \put(80,145){$s=1/4$}
        \put(-20,172){$-1$}
            \put(76,212){$u_{s,1}$}
    \put(76,282){$v_{s,1}$}
     \put(220,180){\includegraphics[width=8 cm]{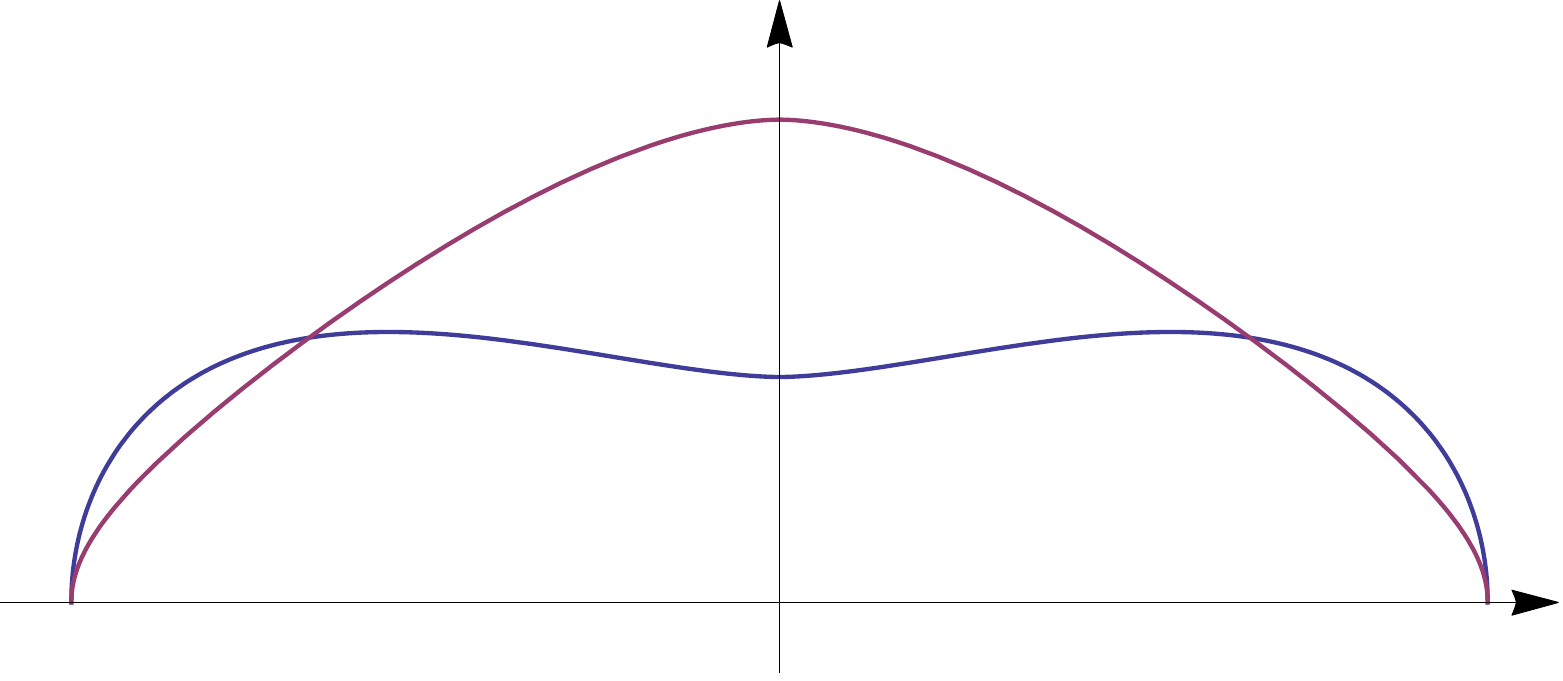}}
     \put(430,172){$1$}
        \put(328,170){$0$}
        \put(324,145){$s=1/2$}
        \put(220,172){$-1$}
            \put(316,214){$u_{s,1}$}
    \put(316,268){$v_{s,1}$}
 \put(-20,20){\includegraphics[width=8 cm]{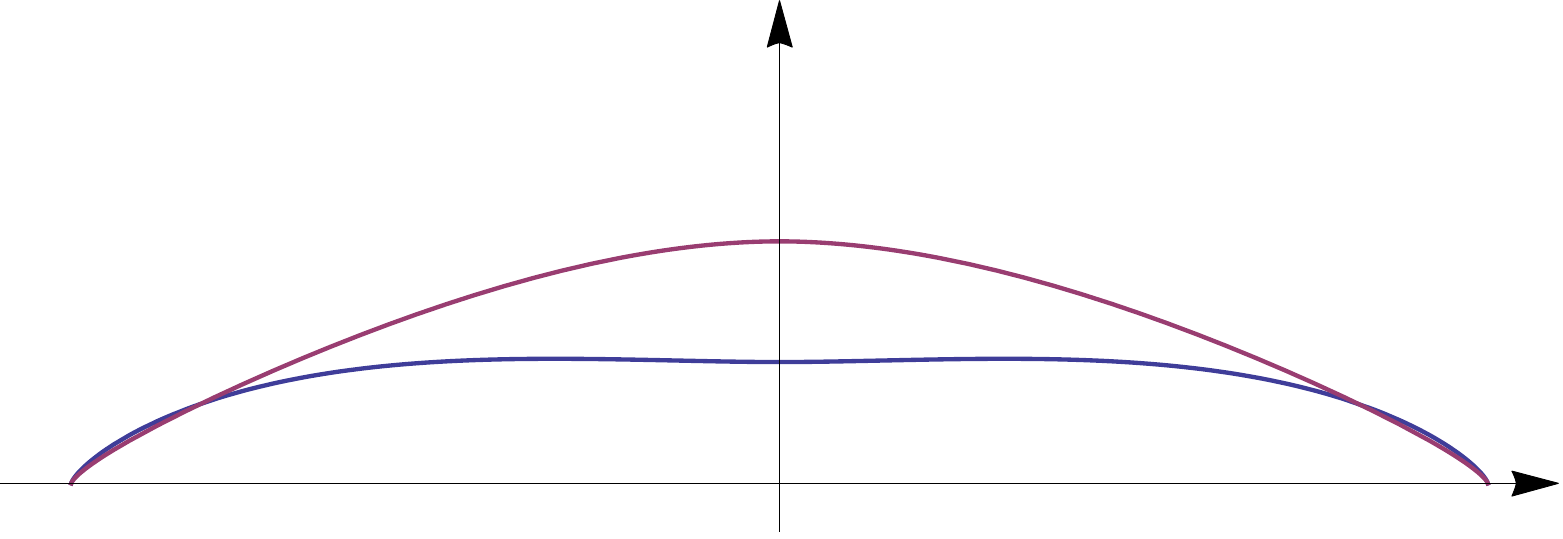}}
    \put(190,13){$1$}
        \put(90,12){$0$}
        \put(80,-1){$s=3/4$}
        \put(-20,13){$-1$}
            \put(76,35){$u_{s,1}$}
    \put(76,70){$v_{s,1}$}
\put(220,20){\includegraphics[width=8 cm]{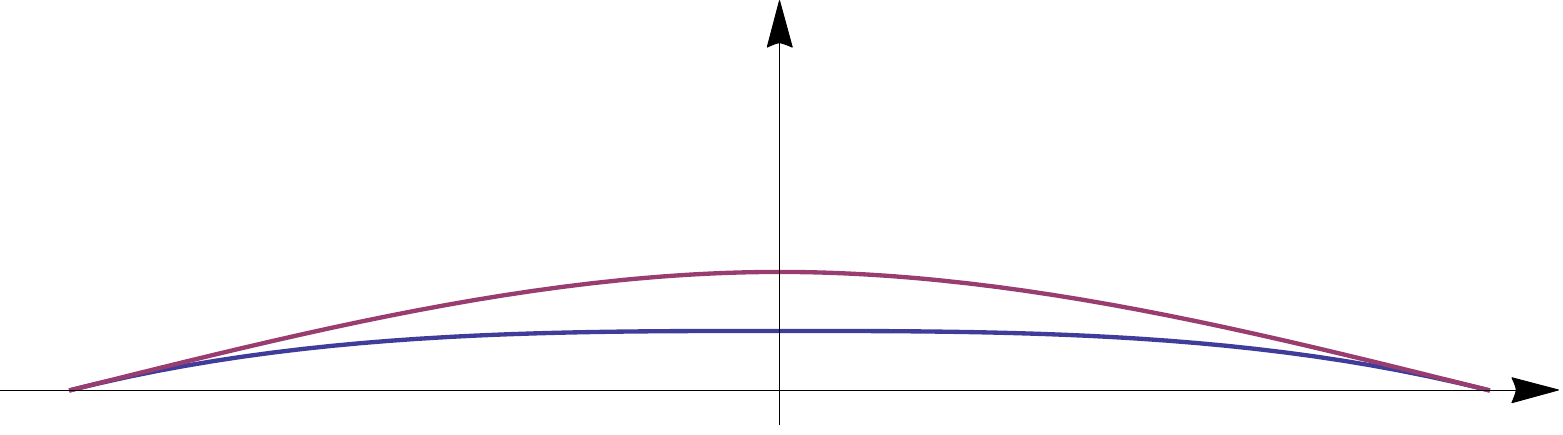}}
    \put(430,13){$1$}
        \put(330,12){$0$}
        \put(324,-1){$s=1$}
        \put(220,13){$-1$}
            \put(316,29){$u_{s,1}$}
    \put(316,50){$v_{s,1}$}
 \end{picture}%
 \caption{Plot of $u_{s,1}$ and $v_{s,1}$}
 \end{figure}

As a matter of fact the phenomenon described above happens to be true for every $s\in(0,1)$. Indeed, denote, respectively, by $u_{s,i}$ and $v_{s,i}$ the solutions to the problems
\begin{equation*}
\left\{
\begin{array}
[c]{lll}%
( -\Delta)^{s}u_{s,i}=f_{i} & & \text{in }%
\Omega,\\
\\
u_{s,i}=0 & & \text{on }\R\setminus\Omega,
\end{array}
\right. %
\qquad
\left\{
\begin{array}
[c]{lll}%
( -\Delta)^{s}v_{s,i}=f_{i}^\# & & \text{in }%
\Omega^\#,\\
\\
v_{s,i}=0 & & \text{on }\R\setminus\Omega^\#,
\end{array}
\right. %
\end{equation*}
where $\Omega$ and $f_{i}$, $i=1,2$ are like above. The following computations are made for $i=1$, similar arguments apply for $i=2$. For $s\in(0,1)$, $s\not=1/2$ the solution $u_{s,1}$ can be computed using the Green function on $\Omega$ (see again \cite[Theorem 3.1]{BUCUR}) obtaining
\begin{equation*}
u_{s,1}(x)=\frac1{2^{2s}\Gamma(s)^2}\int_{-1}^1|x|\,|z-x|^{2s-1}\left(\int_0^{\frac{(1-x^2)(1-z^2)}{|z-x|^2}}\frac{t^{s-1}}{(t+1)^{\frac12}}dt\right)dz
\end{equation*}
and it follows
\begin{align*}
\ell_{u_{s,1}}=&\lim_{x\rightarrow1^-}\frac{u_{s,1}(x)}{(1-x^2)^s}=\frac1{2^{2s}\Gamma(s)\Gamma(s+1)}\int_{-1}^1|z|
(1-z)^{s-1}(1+z)^sdz
\displaybreak[1]\\
\notag\\
\notag=&\frac1{2^{2s}\Gamma(s+1)^2}
\end{align*}
Recalling that  $(1-x^2)^s$ solves the  problem with the source term equal to $\Gamma(2s+1)$ we have
\begin{equation*}
v_{s,1}(x)=\frac{(1-x^2)^s}{\Gamma(2s+1)}-u_{s,1}(x)
\end{equation*}
and it follows
\begin{equation*}
\ell_{v_{s,1}}(s)=\lim_{x\rightarrow1^-}\frac{v_{s,1}(x)}{(1-x^2)^s}=\frac1{\Gamma(2s+1)}-\frac1{2^{2s}\Gamma(s+1)^2}
\end{equation*}
Now we use the following property of the function $\Gamma(x)$
\begin{equation*}
\sqrt\pi 2^{1-2x}\Gamma(2x)=\Gamma(x)\Gamma(x+\tfrac12),\qquad x>0,
\end{equation*}
and the definition of
beta function
\begin{equation*}
B(x,y)=\int_0^1t^{x-1}(1-t)^{y-1}dt=\frac{\Gamma(x)\Gamma(y)}{\Gamma(x+y)},
\end{equation*}
to get
\begin{align*}
\ell_{u_{s,1}}(s)-\ell_{v_{s,1}}(s)=&\frac2{2^{2s}\Gamma(s+1)^2}-\frac1{\Gamma(2s+1)}=\frac1{\Gamma(2s+1)}
\left(\frac2{\sqrt\pi}\frac{\Gamma(s+\frac12)}{\Gamma(s+1)}-1
\right)
\displaybreak[1]\\
\notag\\
\notag=&\frac1{\Gamma(2s+1)}
\left(\frac2{\pi}B(s+\tfrac12,\tfrac12)-1
\right)>0,\hskip3cm s\in(0,1),
\end{align*}
where we have observed that by definition $B(s+\tfrac12,\tfrac12)$ is decreasing with respect to $s\in(0,1)$ and $B(\tfrac32,\tfrac12)=\frac\pi2$.
Thus, as above, in a left neighborhood of $x=1$, it holds
$$u_{s,1}^\#(x)>v_{s,1}(x).$$
Now we can repeat the arguments used in the case $s=\frac12$ to show that the pointwise estimate
$$u_{s,1}^\#(x)\le v_{s,1}(x),$$
does not hold true in $(-1,1)$.

In Figure 1 and Figure 2 the functions $u_{s,i}$ and $v_{s,i}$ for various values of $s$ are plotted using the same unit for the two axes. The plots have been obtained applying the numerical tools offered by \textit{Mathematica} to the explicit 1D formula contained in \cite[Theorem 3.1]{BUCUR}
and proved to agree as expected with the outputs of the \textit{Matlab} code kindly provided by the authors of \cite{ABGOVAZ}.

\begin{figure}[t]\label{Figura2}
\begin{picture}(200,200)
    \put(-20,180){\includegraphics[width=8 cm]{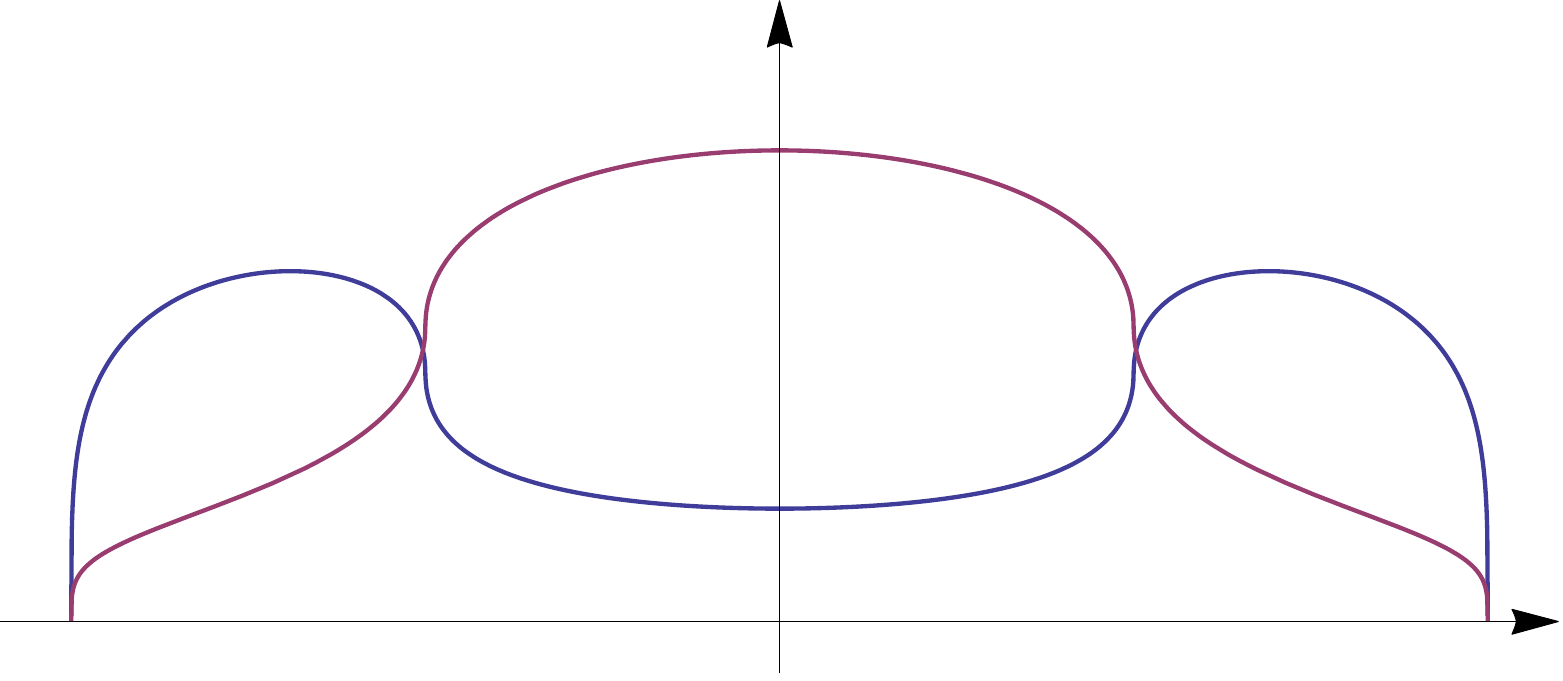}}
    \put(190,172){$1$}
        \put(90,170){$0$}
        \put(80,145){$s=1/4$}
        \put(-20,172){$-1$}
            \put(76,210){$u_{s,2}$}
    \put(76,262){$v_{s,2}$}
     \put(220,180){\includegraphics[width=8 cm]{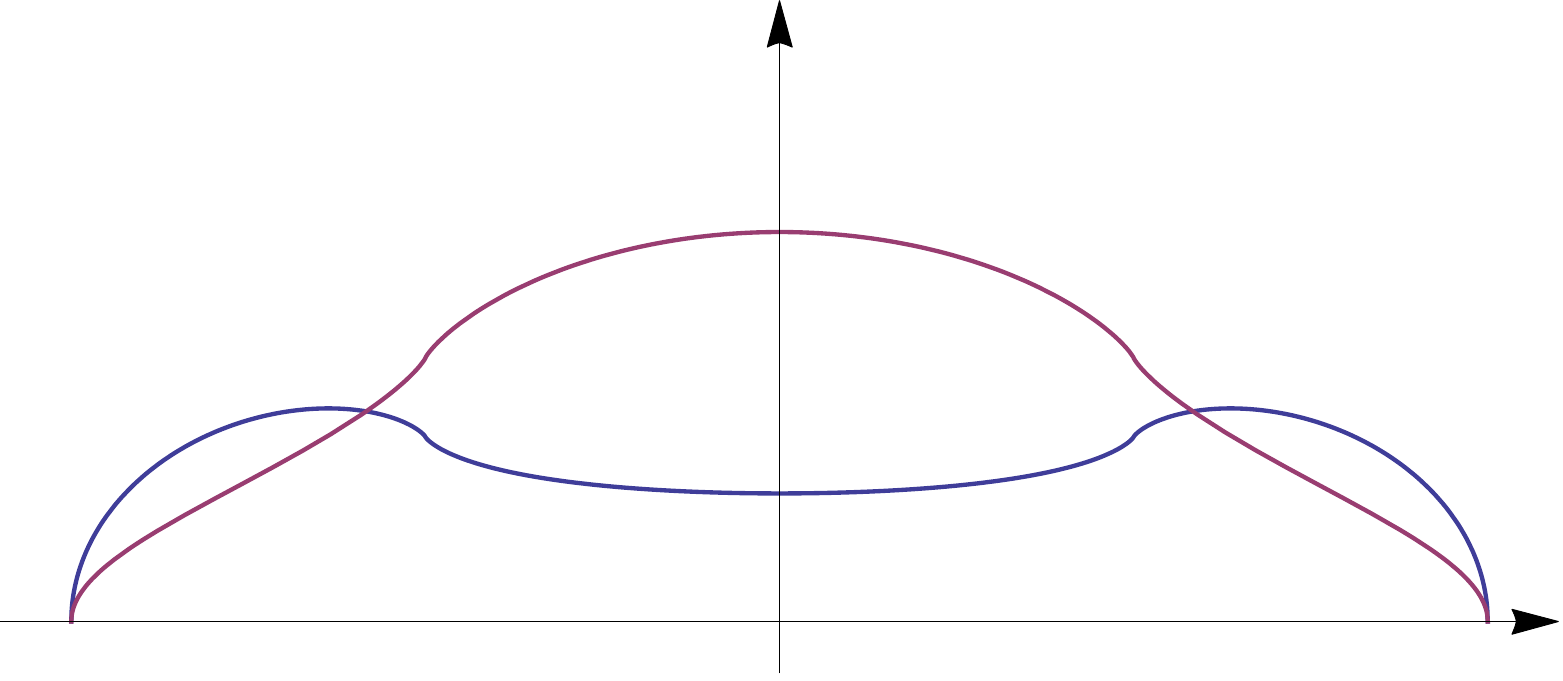}}
     \put(430,172){$1$}
        \put(328,170){$0$}
        \put(324,145){$s=1/2$}
        \put(220,172){$-1$}
            \put(316,213){$u_{s,2}$}
    \put(316,250){$v_{s,2}$}
 \put(-20,20){\includegraphics[width=8 cm]{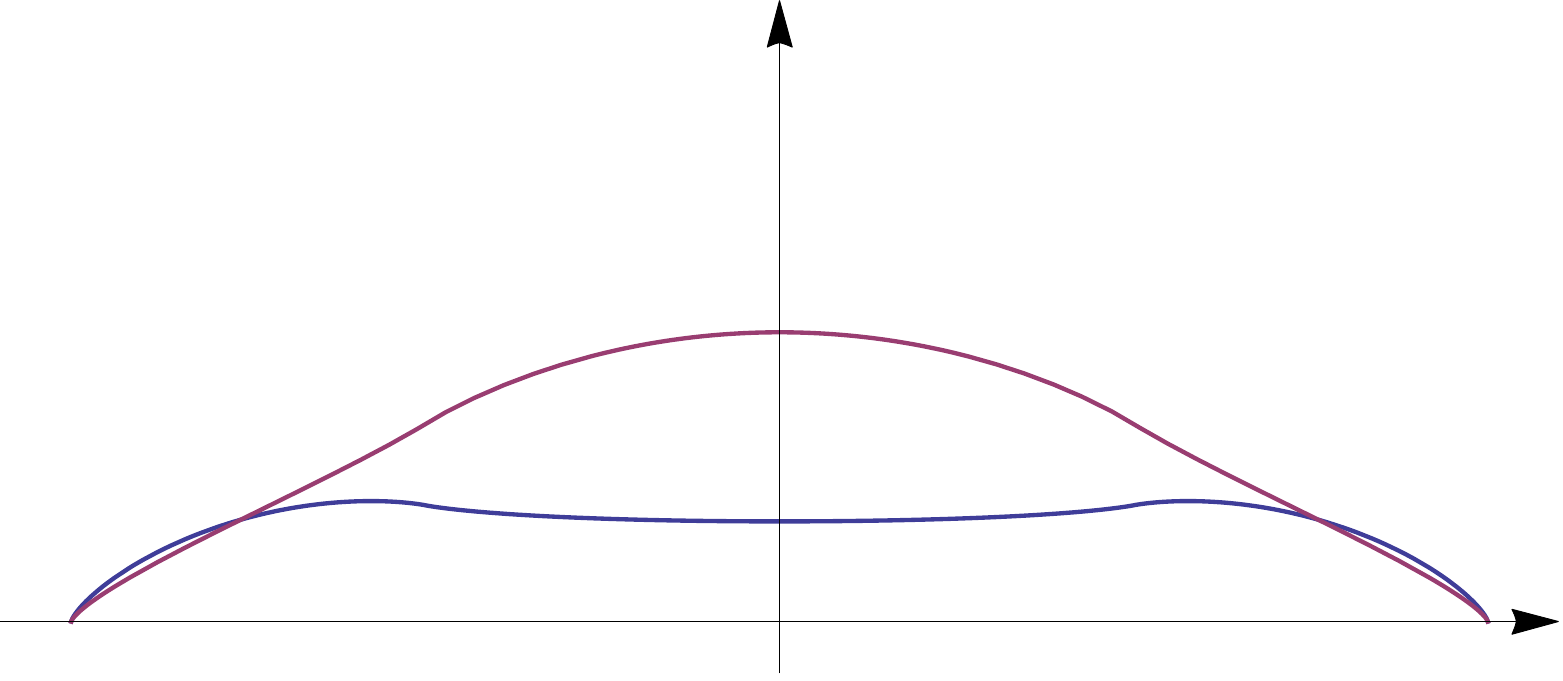}}
    \put(190,13){$1$}
        \put(90,12){$0$}
        \put(80,-1){$s=3/4$}
        \put(-20,13){$-1$}
            \put(76,35){$u_{s,2}$}
    \put(76,76){$v_{s,2}$}
\put(220,20){\includegraphics[width=8 cm]{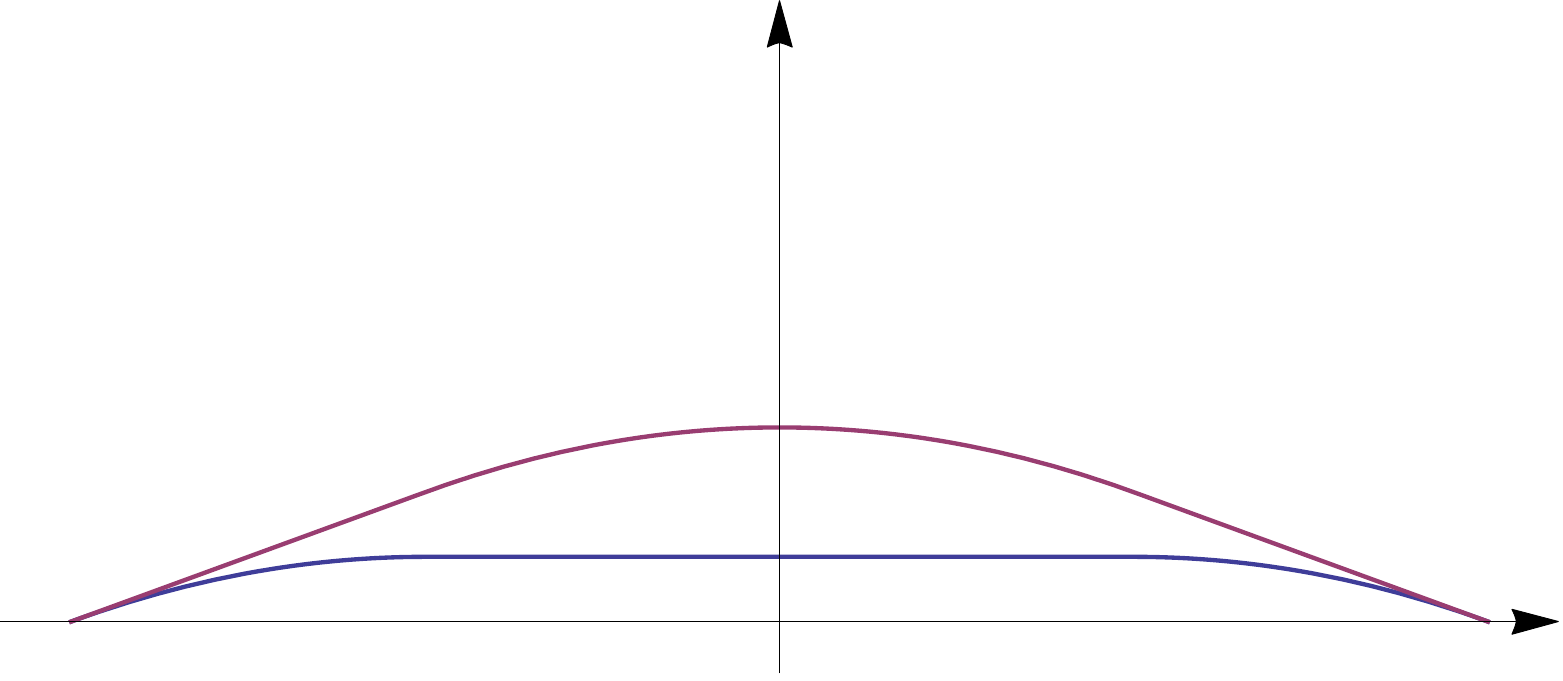}}
    \put(430,13){$1$}
        \put(330,12){$0$}
        \put(324,-1){$s=1$}
        \put(220,13){$-1$}
            \put(316,31){$u_{s,2}$}
    \put(316,64){$v_{s,2}$}
\end{picture}%
 \caption{Plot of $u_{s,2}$ and $v_{s,2}$}
 \end{figure}
\newpage
\section{Proof of Theorem \ref{main}}\label{Sec4}
\subsection{The case where $f$ is nonnegative and regular}

In the present subsection we suppose that $f\in C_{0}^{\infty}(\R^{N})$ and $f(x)\ge0$ in $\Omega$. It follows that $u$ is a continuous function on $\R^N$ of class $C^1$ in $\Omega$ and $0\le u(x)\le u_{\text{max}}<+\infty$ in $\Omega$, where $u_{\text{max}}=\max_{\R^N}u$ (see, \emph{e.g.}, \cite[Theorem 1.1]{GenRos}). We split the proof in four steps.
\vskip6pt
\noindent $\bullet$ {\textit{Step 1: Choice of the test function and nonlocal P\'olya-Sz{e}g\"o inequality}}.
\vskip6pt
\noindent For $0\le t<u_{\text{max}}$ and $h>0$, we choose the following test function\[
\varphi(x)=\mathcal{G}_{t,h}(u(x))
\]
where $\mathcal{G}_{t,h}(\theta)$ is the classical truncation
\begin{equation}
\mathcal{G}_{t,h}(\theta)  =\left\{
\begin{array}
[c]{lll}%
h &  & \text{if }\theta > t+h\\
&  & \\
\theta-t\, &  & \text{if }t< \theta \le t+h\\
&  & \\
0 &  & \text{if }\theta \leq t.\text{ }%
\end{array}
\right.\label{truncation}
\end{equation}
Notice that since
\[
|\mathcal{G}_{t,h}(u(x))-\mathcal{G}_{t,h}(u(y))|\leq\left |u(x)-u(y)\right|
\]
we immediately have that $\mathcal{G}_{t,h}(u)\in \mathcal{H}^{s}(\Omega)$, thus we can use $\mathcal{G}_{t,h}(u)$ as a test function in the weak formulation of problem \eqref{eq.0}.
Then we have
\begin{equation}
\frac{\gamma(N,s)}{2}\int_{\R^{N}}\int_{\R^{N}}\frac{\left(u(x)-u(y)\right)\left(\mathcal{G}_{t,h}(u(x))-\mathcal{G}_{t,h}(u(y))\right)}{|x-y|^{N+2s}}dx\,dy=
\int_{\Omega}f(x)\,\mathcal{G}_{t,h}(u(x))\,dx.\label{eqtest}
\end{equation}
In the spirit of \cite{lions} for the local case, our aim is now to find a bound from below of the left-hand side of \eqref{eqtest} in terms of the radially decreasing rearrangement $u^{\#}$.
More precisely, we will prove the inequality
\begin{align}\label{Polyatype}
&\int_{\R^{N}}\int_{\R^{N}}\frac{\left(u(x)-u(y)\right)\left(\mathcal{G}_{t,h}(u(x))-\mathcal{G}_{t,h}(u(y))\right)}{|x-y|^{N+2s}}dxdy
\\ &\geq
 \int_{\R^{N}}\int_{\R^{N}}\frac{\left(u^{\#}(x)-u^{\#}(y)\right)\left(\mathcal{G}_{t,h}(u^{\#}(x))-\mathcal{G}_{t,h}(u^{\#}(y))\right)}{|x-y|^{N+2s}}dxdy.\nonumber
\end{align}
{This approach consistently differs from the usual procedure in the local case, where Fleming-Rishel formula and isoperimetric inequality are employed and seem not to work in the present setting.}
Following \cite[Section 9]{ALIEB}, we write
\[
\int_{\R^{N}}\int_{\R^{N}}\frac{\left(u(x)-u(y)\right)\left(\mathcal{G}_{t,h}(u(x))-\mathcal{G}_{t,h}(u(y))\right)}{|x-y|^{N+2s}}dxdy
=\frac{1}{\Gamma(\frac{N+2s}{2})}\int_{0}^{\infty}I_{\alpha}[u,t,h]\,\alpha^{(N+2s)/2-1}d\alpha,
\]
where
\begin{equation}
I_{\alpha}[u,t,h]=\int_{\R^{N}}\int_{\R^{N}}\left(u(x)-u(y)\right)\left(\mathcal{G}_{t,h}(u(x))-\mathcal{G}_{t,h}(u(y))\right)\exp[-|x-y|^{2}\alpha]dx,dy.
\label{It}
\end{equation}
By virtue of this last representation, inequality \eqref{Polyatype} is proved when we succeed to show that
\begin{equation}
I_{\alpha}[u,t,h]\geq I_{\alpha}[u^{\#},t,h]\label{IneqI},
\end{equation}
for all $\alpha>0$. To this aim, we use Riesz's general rearrangement inequality \eqref{mainRieszineq} with the choice $W_{\alpha}(x)=\exp[-|x|^{2}\alpha]$, $a=1,\,b=-1$ and
\[
F(u,v)=u^{2}+v^{2}-(u-v)(\mathcal{G}_{t,h}(u)-\mathcal{G}_{t,h}(v))
\]
for all $u,\,v>0$. Observe that the function $F$ is eligible, since the fact that $\mathcal{G}_{t,h}(\theta)\leq \theta$ for all $\theta\geq0$, simple computations give
\[
F(u,v)=u(u-\mathcal{G}_{t,h}(u))+v(v-\mathcal{G}_{t,h}(v))+u\mathcal{G}_{t,h}(v)+v\mathcal{G}_{t,h}(u)\geq0
\]
and also \eqref{F} holds, because for given $u_{2}\geq u_{1}>0$ and $v_{2}\geq v_{1}>0$ one has
\[
F(u_{2},v_{2})+F(u_{1},v_{1})- F(u_{2},v_{1})-F(u_{1},v_{2})=(v_{1}-v_{2})(\mathcal{G}_{t,h}(u_1)-\mathcal{G}_{t,h}(u_2))
+(u_{1}-u_{2})(\mathcal{G}_{t,h}(v_{1})-\mathcal{G}_{t,h}(v_{2}))\geq0.
\]
Plugging such function $F$ in \eqref{mainRieszineq} yields
\[
\int_{\R^{N}}\int_{\R^{N}}F(u(x),u(y))\,W_{\alpha}(x-y)dx\,dy
\leq \int_{\R^{N}}\int_{\R^{N}}F(u^{\#}(x),u^{\#}(y))\,W_{\alpha}(x-y)dx\,dy
\]
that is
\begin{align}
&\int_{\R^{N}}\int_{\R^{N}}\left[u^{2}(x)+u^{2}(y)-(u(x)-u(y))\left(\mathcal{G}_{t,h}(u(x))\label{applRiesz}
-\mathcal{G}_{t,h}(u(y))\right)\right]W_{\alpha}(x-y)dx\,dy\\
& \leq
\int_{\R^{N}}\int_{\R^{N}}\left[(u^{\#})^{2}(x)+(u^{\#})^{2}(y)-(u^{\#}(x)-u^{\#}(y))\left(\mathcal{G}_{t,h}(u^{\#}(x))-
\mathcal{G}_{t,h}(u^{\#}(y))\right)\right]W_{\alpha}(x-y)dx\,dy.\nonumber
\end{align}
Notice that by the equimisurability property of rearrangements we have
\[
\int_{\R^{N}}\int_{\R^{N}} (u^{\#})^{2}(x)\,W_{\alpha}(x-y)dx\,dy=\int_{\R^{N}}\int_{\R^{N}} u^{2}(x)\,W_{\alpha}(x-y)dx\,dy
\]
then employing the symmetry of the kernel $W_{\alpha}$ in \eqref{applRiesz} we find \eqref{IneqI}.\\

So far we proved, as an easy consequence of \eqref{eqtest}, that
\begin{equation}
 \frac{\gamma(N,s)}{2}\int_{\R^{N}}\int_{\R^{N}}\frac{\left(u^{\#}(x)-u^{\#}(y)\right)\left(\mathcal{G}_{t,h}(u^{\#}(x))-\mathcal{G}_{t,h}(u^{\#}(y))\right)}{|x-y|^{N+2s}}dxdy
 \leq \int_{\Omega}f(x)\,\mathcal{G}_{t,h}(u(x))\,dx.\label{mainineq}
 \end{equation}

\vskip6pt
\noindent  $\bullet$ {\textit{Step 2: Rewriting \eqref{mainineq} in the radial coordinate}}.
\vskip6pt
\noindent This long step will be devoted in rewriting \eqref{mainineq} in terms of one-variable integrals with respect the radial coordinate $r:=|x|$. It will strongly involve the properties of the hypergeometric functions recalled in Section \ref{Hyperg}. First we set
 \[
 \mathfrc{u}(x)=\mathfrc{u}(|x|):=u^{\#}(x),
 \]
hence $\mathfrc{u}$ is a nonincreasing continuous function defined on $\R^N$ which vanishes for $|x|\ge R>0$, $R$ being the radius of $\Omega^{\#}$, and $t,h>0$. Furthermore, taking into account the fact
that Schwarz rearrangement preserves Lipschitz continuity (see, \emph{e.g.}, \cite[Theorem 2.3.3]{Kesavan}) because of the regularity of $u$, the function $\mathfrc{u}$ is \emph{locally} Lipschitz continuous in $\Omega^{\#}$, namely it is Lipschitz continuous in every $B_r(0)$ with $0<r<R$. For every $0\le t\le u_{\text{max}}$ there exists a unique $r(t)$ such that $|\{x:\mathfrc{u}(x)>t\}|=|B_{r(t)}(0)|$.
{
We observe that $r(t)$ is a non increasing right continuous function on $0\le t\le u_{\text{max}}$.} Our main issue now is \emph{how} to pass to the limit as $h\rightarrow 0$ in \eqref{mainineq}. To this aim, let us consider the following integral:
\begin{align}\allowdisplaybreaks
I_{t,h}= \frac1{N\omega_{N}}
\int_{\R^{N}}\int_{\R^{N}}\frac{\left(\mathfrc{u}(x)-\mathfrc{u}(y)\right)\left(\mathcal{G}_{t,h}(\mathfrc{u}(x))-\mathcal{G}_{t,h}(\mathfrc{u}(y))\right)}{|x-y|^{N+2s}}dxdy,
\label{integralh}
\end{align}
thus \eqref{mainineq} translates to
\[
 N\omega_{N}\frac{\gamma(N,s)}{2} I_{t,h}\leq \int_{\Omega}f(x)\,\mathcal{G}_{t,h}(u(x))\,dx.
\]

Putting as usual $r=|x|$, we have:
\begin{equation*}
I_{t,h}=\int_0^{+\infty}\left(\int_0^{+\infty}\big(\mathfrc{u}(r)-\mathfrc{u}(\rho)\big)\big(\mathcal{G}_{t,h}(\mathfrc{u}(r))-\mathcal{G}_{t,h}(\mathfrc{u}(\rho))\big)\Theta_{N,s}(r,\rho)\rho^{N-1}d\rho\right)r^{N-1}dr,
\end{equation*}
where
\begin{equation}\label{phi}
\Theta_{N,s}(r,\rho)=\frac1{N\omega_{N}}\int_{|x'|=1}\left(\int_{|y'|=1}\frac1{|r\,x'-\rho\,y'|^{N+2s}}dH^{N-1}(y')\right)dH^{N-1}(x')
\end{equation}
In order to compute $\Theta_{N,s}(r,\rho)$ we observe that the internal integral in \eqref{phi} does not depend on $x'$, so one can compute it choosing any fixed $x'$, obtaining
\begin{align}\allowdisplaybreaks\label{theta}
\Theta_{N,s}(r,\rho)=&\int_{|y'|=1}\frac1{|r\,x'-\rho\,y'|^{N+2s}}dH^{N-1}(y')\\
\notag\\
=&\alpha_N\int_{0}^{\pi}\frac{\sin^{N-2}\theta}{(r^{2}-2r\rho\cos\theta+\rho^{2})^{\frac{N+2s}2}}d\theta,\notag
\end{align}
where
$$\alpha_{N}=\frac{2\pi^{\frac{N-1}2}}{\Gamma(\frac{N-1}2)}.
$$

By definition $\Theta_{N,s}(r,\rho)$ is symmetric, that is,
$$\Theta_{N,s}(r,\rho)=\Theta_{N,s}(\rho,r),
$$
and, by \eqref{hyper}, it follows
\begin{equation}\label{explicit}
\Theta_{N,s}(r,\rho)=
\left\{
\begin{array}{ll}
\dfrac{\alpha_{N}}{\rho^{N+2s}}\>{}_{2}F_1\left(\dfrac{N+2s}2,s+1;\dfrac N2;\dfrac{r^{2}}{\rho^{2}}\right)  &\quad   \text{if }0\le r<\rho<+\infty \\
&\\
\dfrac{\alpha_{N}}{r^{N+2s}}\>{}_{2}F_1\left(\dfrac{N+2s}2,s+1;\dfrac N2;\dfrac{\rho^{2}}{r^{2}}\right) &   \quad   \text{if }0\le \rho<r<+\infty\\
\end{array}
\right.
\end{equation}
It is possible to obtain the asymptotic behaviour of $\Theta_{N,s}(r,\rho)$ as $r,\rho \rightarrow+\infty$ or $|r-\rho|\rightarrow0$. Indeed, using \eqref{zero} with $a=\frac{N+2s}2$, $b=s+1$, $c=\frac N2$, we have
\begin{equation}\label{infty}
\left\{
\begin{array}{ll}
\Theta_{N,s}(r,\rho)\sim\dfrac1{r^{N+2s}}&\qquad\text{ as }r\rightarrow+\infty\\
\\
\Theta_{N,s}(r,\rho)\sim\dfrac1{\rho^{N+2s}}&\qquad\text{ as }\rho\rightarrow+\infty.
\end{array}
\right.
\end{equation}
On the other hand, with the same choice of $a,\ b,\ c$,
\eqref{linear}
gives
\begin{equation*}
{}_{2}F_1\left(\dfrac{N+2s}2,s+1;\dfrac N2;x\right)= (1 - x)^{-1-2s}{}_{2}F_1\left(-s,\dfrac N2-s-1;\dfrac N2;x\right),
\end{equation*}
so by
\eqref{gauss} we get
\begin{equation}\label{uno}
\Theta_{N,s}(r,\rho)\sim\dfrac1{|r-\rho|^{1+2s}}\qquad\text{ as }|r-\rho|\rightarrow0.
\end{equation}

We also observe that \eqref{formulona}
tells us that the function $\Theta_{N,s}(r,\rho)$ and its derivatives can be related to the function $\Theta_{N+2,s}(r,\rho)$, Actually the use of that equality or a direct computation gives:
\begin{align}\allowdisplaybreaks\label{recura}
&\rho^{N-1}\Theta_{N,s}(r,\rho)=\frac{N+2s}{2\pi}\rho^{N+1}\Theta_{N+2,s}(r,\rho)+\frac 1{2\pi}
\frac{\partial}{\partial \rho}\left(\rho^{N}\Theta_{N+2,s-1}(r,\rho)\right)
\\
\notag\\
\label{recurb}
&\rho^{N-1}\frac{\partial\Theta_{N,s}}{\partial r}(r,\rho)=-\frac{N+2s}{2\pi}r
\frac{\partial}{\partial \rho}\left(\rho^N\Theta_{N+2,s}(r,\rho)\right)
\end{align}

Now we come back to the integral $I_{t,h}$ defined in \eqref{integralh} and observe that a decomposition which makes use of \eqref{truncation} gives:
\begin{align}\allowdisplaybreaks\label{verylongeq}
I_{t,h}=&\int_{r(t+h)}^{r(t)}\left(\int_{r(t+h)}^{r(t)}\big(\mathfrc{u}(r)-\mathfrc{u}(\rho)\big)^2\Theta_{N,s}(r,\rho)\rho^{N-1}d\rho\right)r^{N-1}dr\displaybreak[1]\\ \nonumber\\
&+2\int_0^{r(t+h)}\left(\int_{r(t+h)}^{r(t)}\big(\mathfrc{u}(r)-\mathfrc{u}(\rho)\big)\big(h-\mathfrc{u}(\rho)+t\big)\Theta_{N,s}(r,\rho)\rho^{N-1}d\rho\right)r^{N-1}dr\displaybreak[1]\nonumber\\ \nonumber\\
&+2\int_{r(t)}^{+\infty}\left(\int_{r(t+h)}^{r(t)}\big(\mathfrc{u}(r)-\mathfrc{u}(\rho)\big)\big(-\mathfrc{u}(\rho)+t\big)\Theta_{N,s}(r,\rho)\rho^{N-1}d\rho\right)r^{N-1}dr\displaybreak[1]\nonumber\\ \nonumber\\
&+2h\int_0^{r(t+h)}\left(\int_{r(t)}^{+\infty}\big(\mathfrc{u}(r)-\mathfrc{u}(\rho)\big)\Theta_{N,s}(r,\rho)\rho^{N-1}d\rho\right)r^{N-1}dr\displaybreak[1]\nonumber\\ \nonumber\\
&=I^{(1)}_{t,h}+2I^{(2)}_{t,h}+2I^{(3)}_{t,h}+2hI^{(4)}_{t,h}.\nonumber
\end{align}
Using the fact that $\mathfrc{u}(r)$ is locally Lipschitz continuous in $\Omega^{\#}$ and the asymptotic behaviour of $\Theta_{N,s}(r,\rho)$ as $r\rightarrow \rho$ and $r,\rho\rightarrow+\infty$, we can show that
\begin{equation}\label{inftya}
\frac1h I^{(i)}_{t,h}\xrightarrow{h\rightarrow0}0,\qquad i=1,2,3.
\end{equation}
Indeed, since $\mathfrc{u}$ is radially decreasing we find for $r,\rho\in (r(t+h),r(t))$, $$|(\mathfrc{u}(r)-\mathfrc{u}(\rho)|\leq [\mathfrc{u}(r(t+h))-\mathfrc{u}(r(t))]=h$$ then
\begin{align*}
&\frac1h I^{(1)}_{t,h}\le c\frac1h\int_{r(t+h)}^{r(t)}\left(\int_{r(t+h)}^{r(t)}|(\mathfrc{u}(r)-\mathfrc{u}(\rho)||r-\rho|^{-2s}d\rho\right)dr\\
&\leq c \int_{r(t+h)}^{r(t)}\left(\int_{r(t+h)}^{r(t)}|r-\rho|^{-2s}d\rho\right)dr
\xrightarrow{h\rightarrow0}0.
\end{align*}
Similarly we find
\begin{align*}
0&\le \frac1h I^{(2)}_{t,h}\le c\int_0^{r(t+h)}\left(\int_{r(t+h)}^{r(t)}|r-\rho|^{-2s}d\rho\right)dr\\
& \le c |r(t)-r(t+h)|^{2-2s}
\xrightarrow{h\rightarrow0}0,
\end{align*}
while
\begin{align*}
 \frac1h I^{(3)}_{t,h}&\le \frac{c}{h} \int_{r(t)}^{R}\left(\int_{r(t+h)}^{r(t)}|\mathfrc{u}(\rho)-t||r-\rho|^{-2s}d\rho\right)dr+\frac1h\int_{R}^{+\infty}\left(\int_{r(t+h)}^{r(t)}|\mathfrc{u}(\rho)-t|\Theta_{N,s}(r,\rho)\rho^{N-1}d\rho\right)r^{N-1}dr\\
 &\leq c \left[\int_{r(t)}^{R}\left(\int_{r(t+h)}^{r(t)}|\mathfrc{u}(\rho)-t||r-\rho|^{-2s}d\rho\right)dr
 + \int_{r(t+h)}^{r(t)} \rho^{N-1} \left(\int_{R}^{+\infty}\Theta_{N,s}(r,\rho)r^{N-1}dr\right)d\rho\right]
 \xrightarrow{h\rightarrow0}0.
\end{align*}

Then, collecting \eqref{verylongeq} and \eqref{inftya}, for the left hand side of \eqref{mainineq} it holds, for every $t\in(0,u_\text{max})$,
\begin{align*}\allowdisplaybreaks
&\lim_{h\rightarrow 0^+}\frac{\gamma(N,s)}{2h}\int_{\R^{N}}\int_{\R^{N}}\frac{\left(\mathfrc{u}(x)-\mathfrc{u}(y)\right)\left(\mathcal{G}_{t,h}(\mathfrc{u}(x))-\mathcal{G}_{t,h}(\mathfrc{u}(y))\right)}{|x-y|^{N+2s}}
=\displaybreak[1]\\
&\quad=N\omega_{N}\gamma(N,s)\lim_{h\rightarrow 0^+} I_{t,h}^{(4)}
\\
&\quad=N\omega_{N}\gamma(N,s)\lim_{h\rightarrow 0^+}
\int_0^{r(t+h)}\left(\int_{r(t)}^{+\infty}\big(\mathfrc{u}(r)-\mathfrc{u}(\rho)\big)\Theta_{N,s}(r,\rho)\rho^{N-1}d\rho\right)r^{N-1}dr
\displaybreak[1]\\
\\
&\quad=N\omega_{N}\gamma(N,s)
\int_0^{r(t)}\left(\int_{r(t)}^{+\infty}\big(\mathfrc{u}(r)-\mathfrc{u}(\rho)\big)\Theta_{N,s}(r,\rho)\rho^{N-1}d\rho\right)r^{N-1}dr
\end{align*}
where the last passage to the limit is justified by monotone convergence and by the fact that the function
\begin{equation*}
(r,\rho)\rightarrow \big(\mathfrc{u}(r)-\mathfrc{u}(\rho)\big)\Theta_{N,s}(r,\rho)\rho^{N-1}r^{N-1}
\end{equation*}
is summable on $\big(0,r(t)\big)\times\big(r(t),+\infty\big)$. Indeed, for big values of $\rho$ one uses the asymptotic behaviour of $\Theta_{N,s}(r,\rho)$, while for $r-\rho\rightarrow0$ one uses the fact that
\begin{equation*}
\big|\big(\mathfrc{u}(r)-\mathfrc{u}(\rho)\big)\Theta_{N,s}(r,\rho)\big|\le c|r-\rho|^{-2s}.
\end{equation*}
On the other hand,
\begin{equation*}
\int_{\Omega}f(x)\,\mathcal{G}_{t,h}(u(x))\,dx\xrightarrow{h\rightarrow0}\int_{u>t}f(x)\,dx\le
N\omega_N\int_0^{r(t)}f^{*}(\omega_{N}\rho^{N})\rho^{N-1}d\rho.
\end{equation*}
So, for  every $t\in(0,u_\text{max})$, \eqref{mainineq} implies
\begin{equation}\label{ineq}
\gamma(N,s)
\int_0^{r(t)}\left(\int_{r(t)}^{+\infty}\big(\mathfrc{u}(r)-\mathfrc{u}(\rho)\big)\Theta_{N,s}(r,\rho)\rho^{N-1}d\rho\right)r^{N-1}dr
\le \int_0^{r(t)}f^{*}(\omega_{N}\rho^{N})\rho^{N-1}d\rho
\end{equation}
We now change the variables in both integrals, putting $r^N=\mathfrc{r}$ and $\rho^N=\mathfrc{s}$,
to obtain
\begin{equation}\label{inequa}
\frac{\gamma(N,s)}{N}
\int_0^{r(t)^N}\left(\int_{r(t)^N}^{+\infty}\big(\mathfrc{u}(\mathfrc{r}^{\frac1N})-\mathfrc{u}(\mathfrc{s}^{\frac1N})\big)\Theta_{N,s}(\mathfrc{r}^{\frac1N},\mathfrc{s}^{\frac1N})d\mathfrc{s}\right)d\mathfrc{r}
\le \int_0^{r(t)^N}f^{*}(\omega_{N}\mathfrc{s}^{\frac1N})d\mathfrc{s}
\end{equation}

For $\sigma\in (0,+\infty)$ we consider the functions
\begin{equation*}
H(\sigma)=\int_0^{\sigma}\left(\int_{\sigma}^{+\infty}\big(\mathfrc{u}(\mathfrc{r}^{\frac1N})-\mathfrc{u}(\mathfrc{s}^{\frac1N})\big)\Theta_{N,s}(\mathfrc{r}^{\frac1N},\mathfrc{s}^{\frac1N})\,d\mathfrc{s}\right)d\mathfrc{r}
\end{equation*}
and
\begin{equation*}
G(\sigma)=\int_0^{\sigma}f^{*}(\omega_{N}\mathfrc{s}^{\frac1N})\,d\mathfrc{s}.
\end{equation*}
By the asymptotic behaviors of $\Theta_{N,s}$, both functions $H(\sigma)$ and $G(\sigma)$ are continuous and inequality \eqref{inequa} can be written as
\begin{equation*}
\frac{\gamma(N,s)}{N}
H(\sigma)
\le  G(\sigma),\qquad \text{if }\sigma=r(t)^N \text{ for some }t\ge0
\end{equation*}
Our aim is to show that the above inequality holds true for every $\sigma\in (0,+\infty)$. Two cases are missing:

\vskip6pt
\noindent  (a) $\sigma>r(0)^N$;
\vskip5pt
\noindent  (b) $r(t)^N<\sigma\le r(t^-)^N$, if $u$ has a flat zone at the level $t>0$.
\vskip6pt
%
In case (a) we have (recall that $u=0$ in $\R^{N}\setminus \Omega$):
\begin{equation*}
H(\sigma)=\int_0^{r(0)^N}\left(\int_{\sigma}^{+\infty}\mathfrc{u}(\mathfrc{r}^{\frac1N})\Theta_{N,s}(\mathfrc{r}^{\frac1N},\mathfrc{s}^{\frac1N})\,d\mathfrc{s}\right)d\mathfrc{r}
\end{equation*}
which implies that $H(\sigma)$ is a non increasing function for $\sigma>r(0)^N$. On the other hand, $G(\sigma)$ is constant in the same interval, so, taking into account the fact that $\frac{\gamma(N,s)}{N}
H(r(0)^N)
\le  G(r(0)^N)$, we have
\begin{equation*}
\frac{\gamma(N,s)}{N}
H(\sigma)
\le  G(\sigma),\qquad \sigma> r(0)^N.
\end{equation*}

If $u$ does not have a flat zone we have finished, so let us consider case (b), that is, let $t$ be such that $|\{x:u(x)=t\}|>0$. The claimed inequality holds true at $\sigma=r(t)^N$ and, by continuity, also at $\sigma=r(t^-)^N$,
that is
\begin{equation*}
\frac{\gamma(N,s)}{N}
H(r(t^-)^N)
\le  G(r(t^-)^N).
\end{equation*}
It is immediate to observe that the function $G(\sigma)$ is concave, so, in order to prove the claimed inequality for $r(t)^N<\sigma< r(t^-)^N$, it is sufficient to show that $H(\sigma)$ is convex on such an interval. Indeed, 
it holds
\begin{align*}\allowdisplaybreaks
H'(\sigma)&=\int_{\sigma}^{+\infty}\big(\mathfrc{u}(\sigma^{\frac1N})-\mathfrc{u}(\mathfrc{s}^{\frac1N})\big)\Theta_{N,s}(\sigma^{\frac1N},\mathfrc{s}^{\frac1N})\,d\mathfrc{s}
-\int_{0}^{\sigma}\big(\mathfrc{u}(\mathfrc{r}^{\frac1N})-\mathfrc{u}(\sigma^{\frac1N})\big)\Theta_{N,s}(\mathfrc{r}^{\frac1N},\sigma^{\frac1N})\,d\mathfrc{r}\displaybreak[1]\\
\\
&=\int_{r(t^-)^N}^{+\infty}\big(t-\mathfrc{u}(\mathfrc{s}^{\frac1N})\big)\Theta_{N,s}(\sigma^{\frac1N},\mathfrc{s}^{\frac1N})d\mathfrc{s}
-\int_{0}^{r(t)^N}\big(\mathfrc{u}(\mathfrc{r}^{\frac1N})-t\big)\Theta_{N,s}(\mathfrc{r}^{\frac1N},\sigma^{\frac1N})d\mathfrc{r}\displaybreak[2]\\
\\
&=:\mathsf{H_{1}}(\sigma)-\mathsf{H_{2}}(\sigma).
\end{align*}
We notice that for $r(t)^N<\sigma< r(t^-)^N$, using \eqref{explicit} a direct computation shows that
\[
\mathsf{H_{1}}^{\prime}(\sigma)=\int_{r(t^-)^N}^{+\infty}\big(t-\mathfrc{u}(\mathfrc{s}^{\frac1N})\big)\frac{\partial}{\partial \sigma}\left(\Theta_{N,s}(\sigma^{\frac1N},\mathfrc{s}^{\frac1N})\right)d\mathfrc{s}\geq0,
\]
\[
\mathsf{H_{2}}^{\prime}(\sigma)=\int_{0}^{r(t)^N}\big(\mathfrc{u}(\mathfrc{r}^{\frac1N})-t\big)\frac{\partial}{\partial \sigma}\left(\Theta_{N,s}(\sigma^{\frac1N},\mathfrc{r}^{\frac1N})\right)d\mathfrc{r}\leq0,
\]
then $H'(\sigma)$ is increasing in $r(t)^N<\sigma< r(t^-)^N$, implying that $H(\sigma)$ is convex.

Thus, we have proved
\begin{equation*}
\frac{\gamma(N,s)}{N}
H(\sigma)
\le  G(\sigma),\qquad \sigma\ge0,
\end{equation*}
and, performing again a change of variables, for every $r\ge 0$, we get
\begin{equation}\label{inequality}
\gamma(N,s)
\int_0^{r}\left(\int_{r}^{+\infty}\big(\mathfrc{u}(\tau)-\mathfrc{u}(\rho)\big)\Theta_{N,s}(\tau,\rho)\rho^{N-1}d\rho\right)\tau^{N-1}d\tau
\le\int_0^{r}f^{*}(\omega_{N}\rho^{N})\rho^{N-1}d\rho.
\end{equation}
\vskip6pt
\noindent $\bullet$ {\textit{Step 3: Rewriting \eqref{inequality} in terms of the spherical mean function}}.
\vskip6pt
\noindent Now our goal is to \emph{rewrite} the left-hand side of \eqref{inequality} in terms of the following spherical mean function
\[
U(x)=U(|x|)=\frac1{|x|^N}\int_0^{|x|}\mathfrc{u}(\rho)\rho^{N-1}d\rho,
\]
defined for all $x\in \R^N$. It is a very important step and represents one of the main novelty of this approach, allowing to represent left-hand side of \eqref{inequality} as the fractional Laplacian of $U$ in $N+2$ variables. It is easy to show that:
\begin{equation*}
0\le \mathfrc{u}(x)\le u_\text{max},\qquad 0\le U(x)\le \frac{u_\text{max}}N
\end{equation*}
and
\begin{equation*}
\mathfrc{u}(x)\le N\,U(x),\qquad x\in\R^N
\end{equation*}

Furthermore
\begin{equation*}
U'(\rho)=\frac{\mathfrc{u}(\rho)}{\rho}-N\frac{U(\rho)}{\rho}\le0, \qquad\rho>0
\end{equation*}
So $U'(\rho)$ is of class $C^{1,1_{loc}}$ and $U(\rho)$ is strictly decreasing for $\rho\ge r_0$ where $r_0\ge0$ is such that
\begin{equation*}
\omega_Nr_0^N=|\{x\in \R^N:u(x)=u_\text{max}\}|
\end{equation*}
Indeed, by definition,
\[
\begin{array}{ll}
U(\rho)=\dfrac{u_\text{max}}N  &   0\le\rho\le r_0 \\
\\
U'(\rho)<0  &   \rho> r_0 \\
\end{array}
\]
Turning back to \eqref{inequality}, we write the left integral as an improper integral, \emph{i.e.} in the form
\[
\int_0^{r}\left(\int_{r}^{+\infty}\big(\mathfrc{u}(\tau)-\mathfrc{u}(\rho)\big)\Theta_{N,s}(\tau,\rho)\rho^{N-1}d\rho\right)\tau^{N-1}d\tau
=\lim_{\varepsilon\rightarrow 0^{+}}\mathsf{I}_{\varepsilon}
\]
where, for $\varepsilon>0$,
\[
\mathsf{I}_{t,\varepsilon}:=
\int_0^{r-\varepsilon}\left(\int_{r}^{+\infty}\big(\mathfrc{u}(\tau)-\mathfrc{u}(\rho)\big)\Theta_{N,s}(\tau,\rho)\rho^{N-1}d\rho\right)\tau^{N-1}d\tau
\]
and we manipulate $\mathsf{I}_{\varepsilon}$ in order to involve the function $U$. This approach has the advantage to avoid considerations regarding the strong singularity of $\Theta_{N,s}(\tau,\rho)$, when $\tau=\rho$, in the boundary terms appearing in the integration by parts formulas.

Splitting the integral $\mathsf{I}_{t,\varepsilon}$ we have

\begin{align*}
\mathsf{I}_{t,\varepsilon}&=\int_0^{r-\varepsilon}\mathfrc{u}(\tau)\left(\int_{r}^{+\infty}\Theta_{N,s}(\tau,\rho)\rho^{N-1}d\rho\right)\tau^{N-1}d\tau-
\int_0^{r-\varepsilon}\left(\int_{r}^{+\infty}\mathfrc{u}(\rho)\Theta_{N,s}(\tau,\rho)\rho^{N-1}d\rho\right)\tau^{N-1}d\tau\\
&=\int_{r}^{+\infty}\rho^{N-1}\left(\int_0^{r-\varepsilon}\mathfrc{u}(\tau)\Theta_{N,s}(\tau,\rho)\tau^{N-1}\right)d\rho-
\int_0^{r-\varepsilon}\left(\int_{r}^{+\infty}\mathfrc{u}(\rho)\Theta_{N,s}(\tau,\rho)\rho^{N-1}d\rho\right)\tau^{N-1}d\tau\\
\end{align*}
and since
\[
\mathfrc{u}(\tau)\tau^{N-1}=\frac{d}{d\tau}\int_{0}^{\tau}\mathfrc{u}(\sigma)\sigma^{N-1}d\sigma
\]
integrating by parts we get
\begin{align}\allowdisplaybreaks\label{quattro}
\mathsf{I}_{t,\varepsilon}=&(r-\varepsilon)^N U(r-\varepsilon)\int_{r}^{+\infty}\Theta_{N,s}(r-\varepsilon,\rho)\rho^{N-1}d\rho\displaybreak[1]\\
\notag\\
\notag&+r^NU(r)\int_0^{r-\varepsilon}\Theta_{N,s}(\tau,r)\tau^{N-1}d\tau\displaybreak[1]\\
\notag\\
\notag&
-\int_{r}^{+\infty}\left(\int_0^{r-\varepsilon}U(\tau)\frac{\partial\Theta_{N,s}}{\partial \tau}(\tau,\rho)\tau^Nd\tau\right)\rho^{N-1}d\rho
\displaybreak[1]\\
\notag\\
\notag&
+\int_0^{r-\varepsilon}\left(\int_{r}^{+\infty}U(\rho)\frac{\partial\Theta_{N,s}}{\partial \rho}(\tau,\rho)\rho^Nd\rho\right)\tau^{N-1}d\tau.
\end{align}
Now we evaluate each integral above. For the first two integrals we use  \eqref{recura} to get
\begin{align}\allowdisplaybreaks\label{inta}
&(r-\varepsilon)^NU(r-\varepsilon)\int_{r}^{+\infty}\Theta_{N,s}(r-\varepsilon,\rho)\rho^{N-1}d\rho\displaybreak[1]\\
\notag\\
\notag&=\frac{N+2s}{2\pi}(r-\varepsilon)^NU(r-\varepsilon)\int_{r}^{+\infty}\Theta_{N+2,s}(r-\varepsilon,\rho)\rho^{N+1}d\rho+\displaybreak[1]\\
\notag\\
\notag&\qquad-\frac{1}{2\pi}r^{N}(r-\varepsilon)^NU(r-\varepsilon)\Theta_{N+2,s-1}(r-\varepsilon,r),
\end{align}
and (recall that $\Theta_{N,s}$ is symmetric)\begin{align}\allowdisplaybreaks\label{intb}
&r^NU(r)\int_0^{r-\varepsilon}\Theta_{N,s}(\tau,r)\tau^{N-1}d\tau=\displaybreak[1]\\
\notag\\
\notag&\quad=\frac{N+2s}{2\pi}r^NU(r)\int_0^{r-\varepsilon}\Theta_{N+2,s}(\tau,r)\tau^{N+1}d\tau+\displaybreak[1]\\
\notag\\
\notag&\qquad+\frac{1}{2\pi}r^N(r-\varepsilon)^{N}U(r)\Theta_{N+2,s-1}(r-\varepsilon,r)
\end{align}

For the remaining two integrals in \eqref{quattro} we use  \eqref{recurb} to get
\begin{align}\allowdisplaybreaks\label{intc}
\int_{r}^{+\infty}\left(\int_0^{r-\varepsilon}U(\tau)\frac{\partial\Theta_{N,s}}{\partial \tau}(\tau,\rho)\tau^Nd\tau\right)\rho^{N-1}d\rho&=\int_0^{r-\varepsilon}\left(U(\tau)\int_{r}^{+\infty}\frac{\partial\Theta_{N,s}}{\partial \tau}(\tau,\rho)\rho^{N-1}d\rho\right)\tau^Nd\tau\displaybreak[1]\\
\notag\\
\notag=&\frac{N+2s}{2\pi}r^N\int_0^{r-\varepsilon}U(\tau)\Theta_{N+2,s}(\tau,r)\tau^{N+1}d\tau,
\end{align}

and
\begin{align}\allowdisplaybreaks\label{intd}
\int_0^{r-\varepsilon}\left(\int_{r}^{\infty}U(\rho)\frac{\partial\Theta_{N,s}}{\partial \rho}(\tau,\rho)\rho^Nd\rho\right)\tau^{N-1}d\tau&=\int_{r}^{+\infty}\left(U(\rho)\int_0^{r-\varepsilon}\frac{\partial\Theta_{N,s}}{\partial \rho}(\tau,\rho)\tau^{N-1}d\tau\right)\rho^Nd\rho\displaybreak[1]\\
\notag \\
\notag=&-\frac{N+2s}{2\pi}(r-\varepsilon)^N\int_{r}^{+\infty}U(\rho)\Theta_{N+2,s}(r-\varepsilon,\rho)\rho^{N+1}d\rho.
\end{align}

Collecting \eqref{quattro}-\eqref{intd} we have:
\begin{align}\allowdisplaybreaks\label{quattr}
\mathsf{I}_{t,\varepsilon}&=\frac{N+2s}{2\pi}\Biggl((r-\varepsilon)^N
\int_{r}^{+\infty}\big(U(r-\varepsilon)-U(\rho)\big)\Theta_{N+2,s}(r-\varepsilon,\rho)\rho^{N+1}d\rho+\displaybreak[1]\\
\notag\\
\notag&\qquad+r^N\int_0^{r-\varepsilon}\big(U(r)-U(\tau)\big)\Theta_{N+2,s}(\tau,\rho)\tau^{N+1}d\tau\Biggr)+\displaybreak[1]\\
\notag\\
\notag&\qquad
+\frac{1}{2\pi}r^N (r-\varepsilon)^{N}\big(U(r)-
U(r-\varepsilon)\big)\Theta_{N+2,s-1}(r-\varepsilon,r).
\end{align}
We first observe that for the last term in \eqref{quattr}, in view of \eqref{uno} and of the fact that $U$ is locally Lipschitz continuous in $\Omega^{\#}$, it holds:
\begin{align*}\allowdisplaybreaks
&r^N (r-\varepsilon)^{N}\big(U(r)-
U(r-\varepsilon)\big)\Theta_{N+2,s-1}(r-\varepsilon,r)\xrightarrow{\varepsilon\rightarrow0}0
\end{align*}
As regards the two integrals in \eqref{quattr} we  change the integration variable to get
\begin{align}\allowdisplaybreaks\label{quatt}
&(r-\varepsilon)^N
\int_{r}^{+\infty}\big(U(r-\varepsilon)-U(\rho)\big)\Theta_{N+2,s}(r-\varepsilon,\rho)\rho^{N+1}d\rho+\displaybreak[1]\\
\notag\\
\notag&\qquad\qquad+r^N\int_0^{r-\varepsilon}\big(U(r)-U(\tau)\big)\Theta_{N+2,s}(\tau,\rho)\tau^{N+1}d\tau=\displaybreak[1]\\
\notag\\
\notag&\qquad={(r-\varepsilon)^N}{r^{N+2}}\int_1^{+\infty}\big(U(r-\varepsilon)-U(\rho\,{r})\big)\Theta_{N+2,s}(r-\varepsilon,\rho\,{r})\rho^{N+1}d\rho+\displaybreak[1]\\
\notag\\
\notag&\qquad\qquad+{r^N}{(r-\varepsilon)^{N+2}}\int_1^{+\infty}\big(U(r)-U(\tfrac{r-\varepsilon}\rho)\big)\Theta_{N+2,s}(\tfrac{r-\varepsilon}\rho,r)\rho^{-N-3}d\rho.
\end{align}
We observe that  $U$ is of class $C^{1,1}$, so, for $\rho\ge1$ it holds
\begin{equation*}
0\le U(r-\varepsilon)-U(\rho\,r)=U'(\eta_{\varepsilon}(\rho))\big(r-\varepsilon-\rho\,r\big),\qquad\text{ for some }\eta_\varepsilon(\rho)\in \big(r-\varepsilon,\rho\,r\big)
\end{equation*}
and
\begin{equation*}
0\le U(\tfrac{r-\varepsilon}\rho)-U(r)=U'(\sigma_\varepsilon(\rho))\big(\tfrac{r-\varepsilon}\rho-r\big),\qquad\text{ for some }\sigma_\varepsilon(\rho)\in \big(\tfrac{r-\varepsilon}\rho,r\big).
\end{equation*}
By the definition of $\Theta_{N+2,s}$ we can use the homogeneity property
\begin{equation*}
\Theta_{N+2,s}(\tfrac{r-\varepsilon}\rho,r)=\rho^{N+2+2s}\Theta_{N+2,s}({r-\varepsilon},\rho\,r)
\end{equation*}
and \eqref{quatt} becomes
\begin{align*}\allowdisplaybreaks\label{quatt}
&(r-\varepsilon)^N
\int_{r}^{+\infty}\big(U(r-\varepsilon)-U(\rho)\big)\Theta_{N+2,s}(r-\varepsilon,\rho)\rho^{N+1}d\rho+\displaybreak[1]\\
\notag\\
\notag&\qquad\qquad+r^N\int_0^{r-\varepsilon}\big(U(r)-U(\tau)\big)\Theta_{N+2,s}(\tau,\rho)\tau^{N+1}d\tau=\displaybreak[1]\\
\notag\\
\notag&\qquad={(r-\varepsilon)^N}{r^{N+2}}\int_1^{+\infty}\left(U'(\eta_h(\rho))-U'(\sigma_h(\rho))\frac{(r-\varepsilon)^2}{r^{2}}\rho^{-N-3+2s}\right)\big(r-\varepsilon-\rho\,r\big)\times \displaybreak[1]\\
\notag\\
&\hskip10cm\times\Theta_{N+2,s}(r-\varepsilon,\rho\,{r})\rho^{N+1}d\rho=\displaybreak[1]\\
\notag\\
\notag&\qquad={(r-\varepsilon)^N}{r^{N+2}}\int_1^{+\infty}\big(U'(\eta_h(\rho))-U'(\sigma_h(\rho))\big)\big(r-\varepsilon-\rho\,r\big)\times \displaybreak[1]\\
\notag\\
&\hskip10cm\times\Theta_{N+2,s}(r-\varepsilon,\rho\,{r})\rho^{N+1}d\rho+\displaybreak[1]\\
\notag\\
\notag&\qquad\qquad+{(r-\varepsilon)^N}{r^{N+2}}\int_1^{+\infty}U'(\sigma_h(\rho))\left(1-\frac{(r-\varepsilon)^2}{r^{2}}\rho^{-N-3+2s}\right)\big(r-\varepsilon-\rho\,r\big)\times \displaybreak[1]\\
\notag\\
&\hskip10cm\times\Theta_{N+2,s}(r-\varepsilon,\rho\,{r})\rho^{N+1}d\rho.
\end{align*}
In order to pass to the limit as $\varepsilon\rightarrow0$ we need to know the behaviour of the integrands when $\rho$ is close to 1. Using \eqref{uno} and the fact that $U'$ is locally Lipschitz continuous in $\Omega^{\#}$, we have, for $\varepsilon$ small enough and for $\rho-1$ small enough
\begin{equation*}
\big|\big(U'(\eta_h(\rho))-U'(\sigma_h(\rho))\big)\big(r-\varepsilon-\rho\,r\big)\Theta_{N+2,s}(r-\varepsilon,\rho\,{r})\big|\le c\big|r-\varepsilon-\rho\,r\big|^{1-2s}
\end{equation*}
and, observing that
\begin{align*}
&\big|r-\varepsilon-\rho\,r\big|^{1-2s}\le c\qquad\text{ if }0< s\le\frac12
\displaybreak[1]\\
\notag\\
\notag&
\big|r-\varepsilon-\rho\,r\big|^{1-2s}\le c(r)(\rho-1)^{1-2s}\qquad\text{ if }\frac12< s<1
\end{align*}
it follows that the first integrand is dominated by a summable function in a right neighborhood of $\rho=1$. For what concerns the summability in a neighborhood of $+\infty$ it is enough to observe that from the asymptotic behavior \eqref{infty} we find, for $\rho\rightarrow +\infty$
\[
\Theta(r-\varepsilon,\rho r)\rho^{N+1}\sim C(r)\frac{1}{\rho^{2s+1}}.
\]
As regards the second integrand, we have for $\varepsilon$ small enough and for $\rho-1$ small enough
\begin{align*}
&\left|U'(\sigma_h(\rho))\left(1-\frac{(r-\varepsilon)^2}{r^{2}}\rho^{-N-3+2s}\right)\big(r-\varepsilon-\rho\,r\big)
\Theta_{N+2,s}(r-\varepsilon,\rho\,{r})\right|\le
\displaybreak[1]\\
\notag\\
\notag&\qquad
\le c\big|r-\varepsilon-\rho\,r\big|^{-2s}\left(\rho^{N+3-2s}-\frac{(r-\varepsilon)^2}{r^{2}}\right)\le c(r)
\big|r-\varepsilon-\rho\,r\big|^{-2s}\left(r\rho^{\frac{N+3-2s}2}-(r-\varepsilon)\right).
\end{align*}
We observe that the function
\begin{equation*}
x\rightarrow\frac{ r\rho^{\frac{N+3-2s}2}-x}{\big(\rho\,r-x\big)^{2s}}
\end{equation*}
is increasing with respect to $x\in\big(0,r\big)$, for every fixed $\rho>1$ and $r>0$. Then it follows
\begin{align*}
&\big|r-\varepsilon-\rho\,r\big|^{-2s}\left(r\rho^{\frac{N+3-2s}2}-(r-\varepsilon)\right)\le C(r)
\frac{\rho^{\frac{N+3-2s}2}-1}{(\rho-1)^{2s}}
\end{align*}
and then the second integrand is dominated by a summable function in a right neighborhood of $\rho=1$: indeed, this is clear for $s<1/2$, while for $s\geq1/2$ we have
\[
\frac{\rho^{\frac{N+3-2s}2}-1}{(\rho-1)^{2s}}\leq {C(r)}{(\rho-1)^{1-2s}}.
\]

Hence from \eqref{quattr}, \eqref{quatt} we have from the homogeneity of $\Theta_{N+2,s}$
\begin{equation*}
\mathsf{I}_{t,\varepsilon}\xrightarrow{\varepsilon\rightarrow0}\frac{N+2s}{2\pi}{r^{N-2s}}\int_1^{+\infty}\Big(U(r)-U(\rho\, r)
+\big(U(r)-U(\tfrac{r}\rho)
\big)\rho^{-N-2+2s}\Big)\Theta_{N+2,s}(1,\rho)\rho^{N+1}d\rho
\end{equation*}
which is proportional to the $s$-Laplacian of the function $U(x)=U(|x|)$ computed in $\R^{N+2}$ at the point $r$ (see \cite{FV}). More precisely, observing that
\begin{equation*}
\gamma(N,s)\frac{N+2s}{2\pi}=\frac{s2^{2s}\Gamma\big(\frac{N+2s}2\big)}{\pi^{\frac N2}\Gamma(1-s)}\frac{N+2s}{2\pi}=\gamma(N+2,s)
\end{equation*}
and
using inequality \eqref{inequality} we get
\begin{align*}\allowdisplaybreaks
\frac{\gamma(N+2,s)}{r^{2s}}&\int_1^{+\infty}\Big(U(r)-U(\rho\, r)
+\big(U(r)-U(\tfrac{r}\rho)
\big)\rho^{-N-2+2s}\Big)\Theta_{N+2,s}(1,\rho)\rho^{N+1}d\rho\displaybreak[1]\\
\\
&\le
\frac{1}{r^N}\int_0^{r}f^{\ast}(\omega_{N}\rho^{N})\rho^{N-1}d\rho
\end{align*}
and from \cite[Theorem 1]{FV}
\begin{equation}
(-\Delta)_{\R^{N+2}}^{s}U(r)\leq \frac{1}{r^N}\int_0^{r}f^{\ast}(\omega_{N}\rho^{N})\rho^{N-1}d\rho \label{comparison}
\end{equation}
for all positive $r$.

\vskip6pt
\noindent $\bullet$ {\textit{Step 4: Comparison principle and end of the proof}}.

\vskip6pt
\noindent Now, for what concerns the solution $v$ to the symmetrized problem \eqref{eq.1} we notice that inequality \eqref{Polyatype} becomes an \emph{equality} for the radial symmetry, thus instead of \eqref{comparison} we find
\begin{equation*}
(-\Delta)_{\R^{N+2}}^{s}V(r)= \frac{1}{r^N}\int_0^{r}f^{\ast}(\omega_{N}\rho^{N})\rho^{N-1}d\rho \label{comparison2}
\end{equation*}
where $V(r)$ is the spherical mean of $v$, \emph{i.e.}
\[
V(x)=V(|x|)=\frac1{|x|^N}\int_0^{|x|}v(\rho)\rho^{N-1}d\rho.
\]
Then we reach to the following crucial estimate
\begin{equation}
(-\Delta)_{\R^{N+2}}^{s}U(r)\leq (-\Delta)_{\R^{N+2}}^{s}V(r)\label{comparisonLaplacian}
\end{equation}
on the \emph{whole} space $\R^{N+2}$, equipped with decay conditions for $U,\,V$, that is $U,\,V\rightarrow0$ as $r=|x|_{N+2}\rightarrow\infty$. We claim that
\begin{equation}
U\leq V.\label{UV}
\end{equation}
Indeed, assume that $W:=U-V>0$ is some point $x_{0}$. Let $W(\bar{x}):=\max W>0$, then by \eqref{comparisonLaplacian} and the very definition of fractional Laplacian
\[
0\leq(-\Delta)_{\R^{N+2}}^{s}W(\bar{x})\leq0
\]
thus $(-\Delta)_{\R^{N+2}}^{s}W(\bar{x})=0$, but this implies $W=W(\bar{x})$, and the decay assumption $W\rightarrow0$ for $|x|_{N+2}\rightarrow\infty$ yields $W(\bar{x})=0$, a contradiction. Then \eqref{UV} holds, namely
\[
u^{\#}\prec v.
\]

\subsection{The general case}
Now we remove the hypotheses made in the previous subsection.
If $f(x)\ge0$ is such that $f\in L^p(\Omega)$, $p\ge 2N/(N + 2s)$, with $N>2s$, we consider, for a sequence of smooth compactly supported functions $f_{n}$ and we denote by $u_n$ the corresponding solutions to problem \eqref{eq.0} with data $f_{n}$ and by $v_{n}$ the solutions to the symmetrized problems \eqref{eq.1} with data $f_{n}^{\#}$. {It is not difficult to prove that $u_{n}\rightharpoonup u$ weakly in $\mathcal{H}^{s}(\Omega)$ and $v_{n}\rightharpoonup v$ weakly in $\mathcal{H}^{s}(\Omega^{\#})$. For instance, taking $u_{n}$ as a test function in the weak formulation of problem \eqref{eq.0} with datum $f_{n}$ one has the energy inequality
\begin{align*}
\frac{\gamma(N,s)}{2}\int_{\R^{N}}\int_{\R^{N}}\frac{|u_{n}(x)-u_{n}(y)|^{2}}{|x-y|^{N+2s}}dx\,dy&=
\int_{\Omega}f_{n}(x)\,u_{n}(x)\,dx\\&\leq\|f_{n}\|_{L^{(2^{*}_s)^{\prime}}(\Omega)}\|u_{n}\|_{L^{2^{*}_s}(\Omega)}
\end{align*}
where $2^{*}_s=2N/(N-2s)$ and $(2^{*}_s)^{\prime}=2N/(N+2s)$. Thus fractional Sobolev and Rellich-Kondrachov Theorem (see, \emph{e.g.}, \cite{hitch}) imply that up to subsequences $u_{n}\rightharpoonup u$ weak in $\mathcal{H}^{s}(\Omega)$ and $u_{n}\rightarrow u$ strong in $L^{q}(\Omega)$ for all  $q<2N/(N-2s)$. On the other hand, if $N=1$ and $s\ge1/2$, we suppose $p>1$ and we can use the fact that $u$ and $u_n$ belong to $L^q(\Omega)$ for every $q<+\infty$, obtaining again that up to subsequences $u_{n}\rightharpoonup u$ weak in $\mathcal{H}^{s}(\Omega)$ and $u_{n}\rightarrow u$ strong in $L^{q}(\Omega)$ for all  $q<+\infty$. This is enough to pass in the weak formulation satisfied by $u_n$. A similar argument can be done for $v_n$.} Now, by the previous subsection we have
\[
u_n^{\#}\prec v_n
\]
%
%
and, passing to the limit in $n$, we have our concentration estimate \eqref{UV} when $f(x)\ge0$.

Finally, if no sign assumption is made on $f$ we observe that the comparison principle implies $|u|\leq \tilde u$, being $\tilde u$ the solution to the elliptic problem \eqref{eq.0} having $|f|$ as source datum.
Thus, applying \eqref{UV} to $\tilde u$, we have
\[
u^{\#}\prec\tilde u^\#\prec v,
\]
and the theorem is completely proved.
\hfill\qed

\section{Extensions and remarks}\label{Sec5}

The methods used in the present note appear to be suitable for the investigation about the effects of symmetrization on the solutions of various classes of nonlocal PDEs, such as semilinear equations, fractional parabolic equations of porous medium type (particularly the ones in bounded domains),
equations involving operators (mentioned in the Introduction) with general L\'{e}vy kernels or the nonlinear variant of the fractional Laplacian, the so-called fractional $p$-Laplacian. We plan to address these topics in forthcoming papers.

Here we just point out that an almost immediate application of our main result allows us to state a symmetrization result for linear equations with a zero-order term, namely
\begin{equation}
\left\{
\begin{array}
[c]{lll}%
( -\Delta) ^{s}u+c\,u=f & & \text{in }%
\Omega\\
\\
v=0 & & \text{on }\R^{N}\setminus\Omega.
\end{array}
\right. \label{eq.c}
\end{equation}
where, for example, $c=c(x)\ge0$ with $c\in L^{\infty}(\Omega)$. Indeed, because of the sign assumption, the coefficient $c$ has no influence when the solution $u$ is compared with the solution $v$ to the symmetrized problem \eqref{eq.1}.

A different story is when we wish \emph{not} to neglect the coefficient $c$ in the symmetrization procedure. For instance, assume that $c>0$ is \emph{constant} and we want to compare $u$ with the solution $v$ to the problem
 \begin{equation}
\left\{
\begin{array}
[c]{lll}%
\left( -\Delta\right) ^{s}v+cv=f^{\#}\left( x\right) & & \text{in }%
\Omega^{\#}\\
\\
v=0 & & \text{on }\R^{N}\setminus\Omega^{\#}.
\end{array}
\right. \label{eq.cs}%
\end{equation}
With the same arguments of the proof of Theorem \ref{main} we arrive to the following inequality, satisfied by $u$:
\begin{align*}
\frac{\gamma(N,s)}{N}
\int_0^{r(t)^N}\left(\int_{r(t)^N}^{+\infty}\big(\mathfrc{u}(\mathfrc{r}^{\frac1N})-\mathfrc{u}(\mathfrc{s}^{\frac1N})\big)\Theta_{N,s}(\mathfrc{r}^{\frac1N},\mathfrc{s}^{\frac1N})d\mathfrc{s}\right)d\mathfrc{r}
&+\frac{1}{N}\int_{0}^{r(t)^{N}}\mathfrc{u}(\mathfrc{r}^{\frac1N})d\mathfrc{r}\\
&\le \int_0^{r(t)^N}f^{*}(\omega_{N}\mathfrc{s}^{\frac1N})d\mathfrc{s}
\end{align*}
Observe now that if $u$ has a flat zone at level $t>0$ we easily find, for all $r(t)^{N}<\sigma<r(t^{-})^{N}$,
\[
\int_{0}^{\sigma}\mathfrc{u}(\mathfrc{r}^{\frac1N})d\mathfrc{r}=\int_{0}^{r(t)^{N}}\mathfrc{u}(\mathfrc{r}^{\frac1N})d\mathfrc{r}
+(\sigma-r(t)^{N})t
\]
which is a linear function in $\sigma$, thus the same convexity argument in the proof of Theorem \ref{main} provides
\begin{align*}\label{inequalityCC}
\gamma(N,s)
\int_0^{r}\left(\int_{r}^{+\infty}\big(\mathfrc{u}(\tau)-\mathfrc{u}(\rho)\big)\Theta_{N,s}(\tau,\rho)\rho^{N-1}d\rho\right)\tau^{N-1}d\tau
&+c\int_{0}^{r}\mathfrc{u}(\tau)\tau^{N-1}d\tau\\&
\le\int_0^{r}f^{*}(\omega_{N}\rho^{N})\rho^{N-1}d\rho,
\end{align*}
which becomes an equality when replacing $u$ with $v$. Then we can argue as in Theorem \ref{main} and \eqref{compariso} holds with $u$ and $v$ solutions to \eqref{eq.c} and \eqref{eq.cs}, respectively.
\vskip7pt
We conclude this section with a few remarks about the three main inequalities derived in the proof of the main theorem, namely, inequalities \eqref{Polyatype}, \eqref{inequality} and \eqref{comparison}.

\begin{remark}
Inequality \eqref{Polyatype} allows us to deduce \eqref{inequality}, that is, the ``fractional'' counterpart of inequality \eqref{first} which holds true for solutions to problem \eqref{eq.local}. We would like to emphasize that, unlike the fractional case, in the local case the derivation of \eqref{first} is quite natural because the gradient of a truncation of the solution can be easily computed. Indeed, if $z$ is a solution to \eqref{eq.local}, the use of the test function $\mathcal{G}_{t,h}(z)$ and of the ellipticity condition gives:
$$\int_{\Omega}|D\mathcal{G}_{t,h}(z)|^2dx\le\int_\Omega f(x)\mathcal{G}_{t,h}(z(x))\,dx.
$$
The use of P\'olya-Szeg\"o principle is quite immediate because the function $\mathcal{G}_{t,h}(z)$ is a Sobolev function and it immediately follows
$$\int_{\Omega^\#}|D\mathcal{G}_{t,h}(z^\#)|^2dx\le\int_{\Omega^\#} f^\#(x)\mathcal{G}_{t,h}(z^\#(x))\,dx,
$$
that is, the analogous of inequality \eqref{Polyatype} for the solution of problem \eqref{eq.local}.
\end{remark}
{\begin{remark}
According to Remark \ref{Limit s}, the solution $u$ to problem \eqref{eq.0} for $s=1$, \emph{i.e.} the solution to the local Poisson equation with homogeneous boundary condition $u=0$ on $\partial \Omega$, can be seen as the weak limit of the family of functions $\mathfrc{u}_{s}=u^\#_s$ to \eqref{eq.0} for $s\in (0,1)$.  Observe that the left-hand side of \eqref{inequality} can be written in the form
\begin{align*}
\gamma(N,s)
&\int_0^{r}\left(\int_{r}^{+\infty}\big(\mathfrc{u}_s(\tau)-\mathfrc{u}_s(\rho)\big)\Theta_{N,s}(\tau,\rho)\rho^{N-1}d\rho\right)\tau^{N-1}d\tau\\
&\hskip3cm
=
\gamma(N,s)\int_{B_r}\left(\int_{\R^N}\frac{\mathfrc{u}_{s}(x)-\mathfrc{u}_{s}(y)}{|x-y|^{N+2s}}dy\right)dx=\int_{B_r}(-\Delta)^s \mathfrc{u}_{s}\,dx
\end{align*}
then, passing to the limit as $s\rightarrow1$, the divergence theorem gives
\[
-N\omega_{N}r^{N-1}\mathfrc{u}'(r)\le\int_0^{r}f^{*}(\omega_{N}\rho^{N})\rho^{N-1}d\rho,
\]
which is an equality when $u$ is replaced by $v$. Then integrating on $(0,|\Omega|)$ and using the zero boundary conditions gives
\begin{equation}\label{pointwise}
u^{\#}(x)\leq v(x),\quad x\in \Omega^{\#}
\end{equation}
namely the classical pointwise Talenti's inequality. \\ Another easier form to recover the pointwise comparison is observing that letting $s\rightarrow1$ in \eqref{comparisonLaplacian}, which is a consequence of \eqref{comparison}, provides a comparison between local Laplacians
\[
(-\Delta)_{\R^{N+2}}U(r)\leq (-\Delta)_{\R^{N+2}}V(r)
\]
and a straightforward computation shows
\[
(-\Delta)_{\R^{N+2}}U(r)=-\frac{{u^\#}^{\prime}(r)}{r},\quad(-\Delta)_{\R^{N+2}}V(r)=-\frac{v^{\prime}(r)}{r}
\]
then we recover \eqref{pointwise} again.
\end{remark}}

\begin{remark} A way to get \eqref{comparison} is to use the representation via Fourier transform of the fractional Laplacian applied to radial functions as described in Section \ref{Sec2}. Actually, the idea to deduce an inequality written in terms of a fractional Laplacian computed in $\R^{N+2}$ has originated from the computations we give here. In what follows we suppose that all the passages are justified, in particular we suppose that $\mathfrc{u}(x)=u^\#(x)$ is regular enough in such a way that one can compute $(-\Delta)^s\mathfrc{u}$ pointwise, a property which does not need to be satisfied in our context. Our aim is to compute the integral which is on the left-hand side of \eqref{inequality}, that is,
\begin{equation}\label{integ}
Y(r)=\gamma(N,s)
\int_0^{r}\left(\int_{r}^{+\infty}\big(\mathfrc{u}(\tau)-\mathfrc{u}(\rho)\big)\Theta_{N,s}(\tau,\rho)\rho^{N-1}d\rho\right)\tau^{N-1}d\tau.
\end{equation}
Using the very definition of fractional Laplacian we have
$$Y(r)=
\int_0^{r}(-\Delta)^s\mathfrc{u}(\tau)
\tau^{N-1}d\tau
$$
and, by Theorem \ref{bessela}, we get
$$Y(r)=(2\pi)^{2s+2}
\int_0^{r}
\left(\int_{0}^{+\infty}
\rho^{1+2s}J_{\frac N2-1} (2\pi\tau\rho)\left(\int_{0}^{+\infty}\sigma^{\frac N2}\mathfrc{u}(\sigma)J_{\frac N2-1} (2\pi\rho\sigma)\,d\sigma\right)d\rho\right)
\tau^{\frac N2}d\tau.
$$
Now, supposing that we can do it, we integrate by parts, exchange the order of integration and use the following property of Bessel functions
$$x^{1-\frac N2}J_{\frac N2}(2\pi x\tau)=\frac1{2\pi\tau}\frac d{dx}\left(x^{1-\frac N2}J_{\frac N2-1}(2\pi x\tau)\right),$$
to get
\begin{align*}\allowdisplaybreaks
Y(r)=&(2\pi)^{2s+3}
\int_0^{r}
\left(\int_{0}^{+\infty}
\rho^{2+2s}J_{\frac N2-1} (2\pi\tau\rho)\left(\int_{0}^{+\infty}\sigma^{1+\frac N2}U(\sigma)J_{\frac N2} (2\pi\rho\sigma)\,d\sigma\right)d\rho\right)
\tau^{\frac N2}d\tau
\displaybreak[1]\\
\\
=&(2\pi)^{2s+2}r^{\frac N2}
\int_{0}^{+\infty}
\rho^{1+2s}J_{\frac N2} (2\pi\tau\rho)\left(\int_{0}^{+\infty}\sigma^{1+\frac N2}U(\sigma)J_{\frac N2-1} (2\pi\rho\sigma)\,d\sigma\right)d\rho.
\end{align*}
Using again Theorem \ref{bessela} to compute the $s$-Laplacian in $\R^{N+2}$ applied to $U$, we have:
$$Y(r)=r^N(-\Delta)^s_{\R^{N+2}}U(r).
$$
From \eqref{inequality} it follows \eqref{comparison}.
\end{remark}

\section*{Acknowledgments}

V.F. was partially supported by Italian MIUR through research project PRIN 2017 ``Direct and inverse problems for partial differential equations: theoretical aspects and applications''. B.V. was partially supported by Gruppo Nazionale per l'Analisi Matematica, la Probabilit\`a e le loro Applicazioni (GNAMPA) of Istituto Nazionale di Alta Matematica (INdAM). Both authors are members of GNAMPA of INdAM.\nc

\bibliographystyle{siam}\small
\bibliography{FerVol_arXiv}

\

2000 \textit{Mathematics Subject Classification.}
35B45,  
35R11,   	
35J25. 


%
\textit{Keywords and phrases.} Symmetrization, fractional Laplacian,
 nonlocal elliptic equations.

\end{document}